\documentclass[a4paper,11pt]{article}

\usepackage{a4wide}
\usepackage{amssymb,amsmath,amsfonts} \allowdisplaybreaks
\usepackage{graphicx}
\usepackage{hyperref}
\usepackage{color}
\usepackage{amsthm}
\usepackage{enumerate}
\numberwithin{equation}{section}    
\theoremstyle{plain}
\newtheorem{Theorem}{Theorem}[section]

\newtheorem{Lemma}[Theorem]{Lemma}
\theoremstyle{definition}

\newtheorem{Definition}[Theorem]{Definition}

\theoremstyle{remark}
\newtheorem{Remark}[Theorem]{Remark}

\renewcommand{\epsilon}{\varepsilon}
\renewcommand{\phi}{\varphi}

\newcommand{\ato}[2]{\genfrac{}{}{0pt}{2}{#1}{#2}}
\newcommand{\sprod}[2]{\left\langle#1,#2 \right \rangle}
\DeclareMathOperator{\Tr}{Tr}

\DeclareMathOperator{\supp}{supp}

\DeclareMathOperator{\Id}{Id}
\DeclareMathOperator{\Eig}{Eig}
\DeclareMathOperator{\ran}{ran}
\newcommand{\eul}{\mathrm{e}}

\newcommand{\EE}{\mathbb{E}}
\newcommand{\RR}{\mathbb{R}}
\newcommand{\CC}{\mathbb{C}}
\newcommand{\NN}{\mathbb{N}}
\newcommand{\ZZ}{\mathbb{Z}}
\newcommand{\PP}{\mathbb{P}}

\newcommand{\cP}{\mathcal{P}}

\newcommand{\cB}{\mathcal{B}}
\newcommand{\cH}{\mathcal{H}}

\newcommand{\cA}{\mathcal{A}}
\newcommand{\cZ}{\mathcal{Z}}
\newcommand{\cT}{\mathcal{T}}
\newcommand{\cU}{\mathcal{U}}

\newcommand{\hm}[1]{\textbf{*}\leavevmode{\marginpar{\tiny%
$\hbox to 0mm{\hspace*{-0.5mm}$\leftarrow$\hss}%
\vcenter{\vrule depth 0.1mm height 0.1mm width \the\marginparwidth}%
\hbox to 0mm{\hss$\rightarrow$\hspace*{-0.5mm}}$\\\relax\raggedright #1}}}
%
%
\title
{Uniform existence of the integrated density of states for randomly weighted Hamiltonians on long-range percolation graphs}
\author
{Slim Ayadi\thanks{University of Tunis El Manar, Departement of Mathematics, Facult\' e des Sciences de Tunis, 2092 Tunis, Tunisia} 
\and Fabian Schwarzenberger\thanks{Technische Universit\"a{}t Chemnitz, Fakult\"a{}t f\"u{}r Mathematik, 09107 Chemnitz, Germany} \and Ivan Veseli\'c$^\dagger$}
\date{\today}

\begin{document}
\maketitle
\begin{abstract}
We consider random Hamiltonians defined on long-range percolation graphs over $\ZZ^d$. The Hamiltonian consists of a randomly weighted Laplacian plus a random potential.  
We prove uniform existence of the integrated density of states and express the IDS using a Pastur-Shubin trace formula.
\end{abstract}

%
\section{Introduction}
In the study of solution properties
and spectral features of random ergodic and periodic operators the integrated 
density of states (IDS), also known as spectral distribution function,
plays and important role. 
While it is by its very nature a much simpler object than the original 
one, i.e. the operator family, it exhibits a number of features which 
turn out to be crucial for the understanding of the spectrum and 
the corresponding eigensolutions. Let us spell out some of these features explicitly:
\begin{enumerate}[(A)]
\item the low energy asymptotics of the IDS, 
\item its local and global continuity properties, and
\item its approximability by finite volume analogues.
\end{enumerate}

In fact, (C) is of relevance for all other questions about the IDS since all methods used to answer/understand them  rely on one stage or another on finite volume approximations. 
Of course, depending on the question which is being considered, the type and quality of the approximation (C) 
will vary.

Let us highlight the intimate relationship between the continuity (B) and approximability (C) properties on an elementary level, to provide 
a motivation for the discussion which follows.
On the one hand, if a sequence of probability measures converges weakly to a measure with 
no atoms, the corresponding distribution functions converge already uniformly. On the other hand, 
uniform convergence of continuous distribution functions obviously implies that the limiting 
measure has no atoms.
Now, there are important classes of random operators where it is known that the 
IDS is not continuous (and other, where continuity is still an open question).
In this setting the question arises whether the approximability (C) in the  $L^\infty$-topology
persists or not. More precisely, it is natural to ask, what are reasonable criteria, 
and which is the mechanism, which ensures that the finite volume approximating distribution
functions converge uniformly to the IDS.

Let us review the history of works addressing the above explained approximability question.
In the simplest setting of finite-hopping range ergodic operators on $\ell^2(\ZZ^d)$ 
the continuity of the IDS was established already in \cite{DelyonS-84}.
In fact, this result can be sharpened under very mild conditions to a quantitative form of continuity, 
namely log-H\"older continuity, c.f.~\cite{CraigS-83a}.  

In the corresponding continuum setting, more specifically for ergodic Schr\"odinger 
operators on $L^2(\RR^d)$, for quite some while the continuity of the IDS was known 
for specific classes of random potentials, in particular for the alloy type or continuum Anderson 
potential. This continuity result is typically established via a Wegner estimate, following the 
exposition in the paper \cite{Wegner-81}. Actually, this paper treats (discrete) Anderson models on the 
lattice, but functional analytics tool allow to transfer the methods to the continuum setting.
In any case, for this technique a regular distribution of the values of the random potential has to be assumed. 
For a survey of results in this direction see, e.g. \cite{Veselic-08} and the references cited there. 
If no assumption of the regularity of the distribution of the potential values is assumed
log-H\"older continuity of the IDS of ergodic Schr\"odinger operators in one dimension was established 
in \cite{CraigS-83b}. For dimensions two and three this result was established only very recently
in \cite{BourgainK-11pre}. 
The case $d\geq 4$ is still open. This is due to the fact that the present state of knowledge 
about quantitative unique continuation properties of eigensolutions of Schr\"odinger operators 
based on Carleman estimates, cf. e.g. \cite{BourgainK-05}, 
is good enough for the IDS-continuity proof in dimensions three and less, 
but not above this threshold.

Once one moves away from the Euclidean setting, continuity is no longer 
a prevailing feature of the IDS. There are two important prototypical examples 
of discrete Laplace and Schr\"odinger operators whose IDS exhibit jumps: 
Hamiltonians on percolation clusters \cite{ChayesCFST-86}, \cite{Veselic-05} 
and quasi-crystal graphs \cite{LenzS-03a,LenzS-06,KlassertLS-03}.
In the former model the discontinuities form even a dense set in the spectrum.
Nevertheless, it has been established that the sequence of normalized finite volume
eigenvalue counting functions converges at every energy and even uniformly along the energy axis
to the IDS, cf.~\cite{Veselic-05,LenzS-06, LenzMV-08,LenzV-09}.

The question whether the approximability (C) holds at any given energy is also of interest 
in a purely geometric setting, where no randomness in involved.
More specifically, the papers 
\cite{Lueck-94,MathaiY-02,MathaiSY-03,DodziukLMSY-03}
study the approximation of Betti numbers by their finite volume analogues.
The usual weak convergence of measures is not sufficient to yield this statement.
The results of \cite{LenzV-09,LenzSV-10, PogorzelskiS-12pre} actually apply under very mild and natural geometric assumptions (namely amenability)
and thus lift the pointwise everywhere convergence results of 
\cite{Lueck-94,MathaiY-02,MathaiSY-03,DodziukLMSY-03}
to uniform convergence.

\medskip

However, all papers mentioned so far concern local operators (in the continuum setting)
respectively finite-hopping range operators (in the discrete setting). Let us spell out the last 
property explicitly: A bounded operator $A$ on $\ell^2(G)$ for some graph $G$ is called of 
finite hopping range $R\in \NN$ if for any $\phi\in\ell^2(G)$ and $x \in G$ with distance larger 
than $R$ to $\supp \phi$ 
\[
 (A\phi)(x)=0
\]

The first result for an ensemble of operators on $\ell^2(G)$ beyond this restriction
was achieved in \cite{Schwarzenberger-12}.
It studies Laplace operators on long-range percolation graphs.
These operators are not of finite hopping range, albeit non-zero ``matrix  elements far off the diagonal''
appear with small probability.
Due to the methods applied in \cite{Schwarzenberger-12}, 
the randomness present in the model had to be of finite local complexity.

In the present paper we continue this line of research. Here we are able to treat models
with real-valued entries, possibly continuously distributed, and with long-range interactions.
As mentioned above, the proof of  \cite{Schwarzenberger-12} does not apply in this setting, 
but a combination of ideas from this paper together with methods from \cite{LenzV-09}
allows one to prove uniform approximability in this more general situation.
Also, we will treat  here randomized versions of adjacency as well as Laplace operators.
The results of \cite{Schwarzenberger-12} apply to the second type of ensembles only.

Question (A) has been addressed for certain long-range percolation models before in \cite{AntunovicV-09}, 
extending previous results
for the usual quantum percolation model on $\ZZ^d$, cf. for example~\cite{KirschM-06, MuellerS-07}.
We will quote for completeness sake the result of \cite{AntunovicV-09} below, 
once we have the necessary notation at disposal.

Let us stress an important feature of long-range percolation Hamiltonians. They provide a simple model interpolating between 
discrete random Schr\"odinger operators and random matrices.
This is of interest, since  the two last mentioned classes of operators have quite different spectral features. 
Thus one is led to ask in which aspects and
regimes, long-range percolation Hamiltonians share features with one or the other of these classes.
To explain the structural difference between discrete random Schr\"odinger operators and random matrices 
let us restrict ourselves for the moment to operators on finite segments of $\ZZ$. 
The arising Schr\"odinger operators  are a special type of finite Jacobi matrices, and are 
in particular tri-diagonal. Random matrices have a full array of random entries. The non-zero entries of a Laplacian
of a typical long-range percolation graph are concentrated mostly near the main diagonal, however no diagonal will consist entirely of zeros.
This is the mentioned interpolating property.
The relation between long-range percolation models and random matrices 
was for instance studied in \cite{Ayadi-09b,Ayadi-09a}.

Let us describe the content of the paper in detail. 
In the next section we state the main result in a concise form, discuss extensions to groups and graphs beyond $\ZZ^d$
and the relation to results concerning the low energy asymptotics of the IDS (A) established for long-range percolation graphs in \cite{AntunovicV-09}.

In Section \ref{s:Setting} we present the long-range percolation model and in particular the underlying probability space. 
Furthermore we define the randomly weighted Hamiltonian $H_\omega$ on the long-range percolation graph 
as a selfadjoint and metrically transitive random operator. 
Depending on the choice of the corresponding parameters, 
the operator in question is either the adjacency operator of a long-range percolation graph, 
a Laplacian, or a Schr\"odinger operator (i.e. a Laplacian plus a random potential), each one with random weights on the edges. 
For a realization $\omega$ the restriction of this operator to a finite box $\Lambda_n\subset\ZZ^d$ gives rise to the eigenvalue-counting function $F_n^\omega$. 
This function encodes the distribution of the spectrum of the restricted operator. In the sequel we 
pursue the question whether and in which sense the limit of the sequence of functions $(F_n^\omega)$ exists. 
In Section \ref{s:Weak}, weak convergence of this sequence is established for almost all realizations using a result of Figotin.
In Section \ref{s:Jumps} this statement is upgraded to uniform convergence along the energy axis. 
This is done by proving that the functions $F_n^\omega$ do not only approximate the limit function at its continuity points, 
but also give an efficient estimate of the size of the jumps at a point of discontinuity. 
The key tool to control the size of the jumps is Theorem \ref{theorem:technical}, formulating the main technical contribution of the present paper.

Let us stress that we give a detailed account of all the main steps of the proof thus making it accessible to non-specialists.

\section{Main result}
\label{s:Result}

Here we formulate the main result of the paper using a minimum of notation needed for this purpose. 
More detailed definitions of the framework can be found in Section \ref{s:Setting}.

Denote by $E:=\{\{x,y\}\subseteq \ZZ^d\mid x,y\in \ZZ^d\}$  the set of all edges (or loops) on $\ZZ^d$ and
by $a_e, b_e, e \in E$ a collection of independent real-valued random variables on a probability space $(\Omega,\cA,\PP)$. 
The variance of the $a_e, e \in E$ is uniformly bounded and any two  $a_{\{x,y\}}$  and $a_{\{x+k,y+k\}}$ are identically distributed for $x,y,k \in \ZZ^d$.
Let $p \in \ell^1(\ZZ^d)$ with $0\leq p(x)=p(-x)\leq 1 $ and $b_{\{x,y\}}$ be Bernoulli distributed with parameter $p(x-y)$.
The kernel, resp.~the matrix entries of the random Hamiltonian $H(\omega)$ are given by:
\begin{equation}
 \label{eq:firstDef}
H_{x,y}(\omega):= H_{x,y}^{\alpha,\beta}(\omega) := \begin{cases}
           a_{\{x,y\}}(\omega) b_{\{x,y\}}(\omega) &\text{ if } x\neq y, \\
   \alpha a_{\{x\}}(\omega)b_{\{x\}}(\omega)-\beta\sum_{z\neq x} a_{\{x,z\}}(\omega)b_{\{x,z\}}(\omega) &\text{ if } x=y,
          \end{cases}
\end{equation}
where $\alpha,\beta\in[0,1]$ are fixed numbers. Selfadjointness properties of $H(\omega)$  are discussed in Section \ref{s:Setting}.
Depending on the values $\alpha$ and $\beta$ we obtain several interesting subclasses: 
randomly weighted Laplacians or adjacency operators, with or without random potentials (on the diagonal), 
cf.~Remark \ref{r:alphabeta}.

For $n\in\NN$ let $\Lambda_n:=([-n,n]\cap\ZZ)^d$,  $H_n(\omega)$ be the restriction of $H(\omega)$ to $\Lambda_n$,
and $  F_n^{\omega}(\lambda)$ the number of eigenvalues of $H_n(\omega)$ not exceeding $\lambda$, counting eigenvalues according to their multiplicity.
Set $F:\RR\to\RR$, 
\begin{equation}\label{eq:defF}
 F(\lambda):=\EE\{ \sprod{E_H((-\infty,\lambda])\delta_0}{\delta_0} \},
\end{equation}
 where $\EE\{\cdot\}$ is the expectation with respect to the measure $\PP$, $\sprod{\cdot}{\cdot}$ is the scalar product in $\ell^2(\ZZ)$ and  $E_{H(\omega)}((-\infty,\lambda])$ is the spectral projector of $H(\omega)$ on the interval $(-\infty,\lambda]$.

\begin{Theorem}\label{theorem:main}
Let $F_n^{\omega}, F$ be the distribution functions given above. 
Then there exists a set $\tilde\Omega\subseteq\Omega$ of full measure such that for all $\omega\in\tilde\Omega$  we have
\[
 \lim_{n\to\infty}\sup_{\lambda\in\RR} \left| \frac{F_n^\omega}{|\Lambda_n|}(\lambda) - F(\lambda) \right|=0.
\]
\end{Theorem}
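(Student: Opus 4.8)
The plan is to combine the weak convergence already obtained in Section~\ref{s:Weak} (via the result of Figotin) with a careful analysis of the jumps of $F$, thereby upgrading pointwise convergence at continuity points to uniform convergence along the whole energy axis. Throughout, write $G_n^\omega := F_n^\omega/|\Lambda_n|$; these are genuine distribution functions (non-decreasing, right-continuous, valued in $[0,1]$ with $G_n^\omega(-\infty)=0$ and $G_n^\omega(+\infty)=1$), as is the limit $F$. From Section~\ref{s:Weak} I may assume there is a set of full measure on which $G_n^\omega(\lambda)\to F(\lambda)$ at every continuity point $\lambda$ of $F$. Since $F$ is bounded and monotone, it has at most countably many jumps of summable height, and the obstruction to uniform convergence is concentrated entirely at these jumps: a sequence of monotone functions converging pointwise to a \emph{continuous} monotone limit does so uniformly.

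First I would fix $\epsilon>0$ and isolate the finitely many jumps of $F$ of height exceeding $\epsilon$, say at energies $\lambda_1<\dots<\lambda_k$. Choosing continuity points of $F$ as a fine partition $\mu_0<\dots<\mu_m$ with $F(\mu_0)<\epsilon$, $F(\mu_m)>1-\epsilon$, and successive increments of $F$ below $\epsilon$ except across the $\lambda_i$, monotonicity squeezes $G_n^\omega$ on each sub-interval between the boundary values $G_n^\omega(\mu_j)\to F(\mu_j)$; the tails outside $[\mu_0,\mu_m]$ are controlled automatically since $G_n^\omega\in[0,1]$ is monotone. On every sub-interval not straddling one of the $\lambda_i$ this gives $|G_n^\omega-F|<2\epsilon$ for all large $n$, so it remains only to control $G_n^\omega$ across the finitely many large jumps.

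This is the heart of the matter and is where Theorem~\ref{theorem:technical} enters. The atoms of $F$ arise from eigenvalues possessing finitely supported eigenfunctions, that is, eigenfunctions living on finite components of the long-range percolation graph and forced by structural degeneracies that survive the (possibly continuous) randomization of the weights; such an eigenfunction contained well inside $\Lambda_n$ is automatically an eigenfunction of $H_n(\omega)$. I would show, applying the ergodic (Birkhoff) theorem to the stationary additive process counting finitely supported eigenfunctions per unit volume, that for each atom $\lambda_i$ the normalized multiplicity $\tfrac1{|\Lambda_n|}\big(F_n^\omega(\lambda_i)-F_n^\omega(\lambda_i-0)\big)$ converges almost surely to the jump $F(\lambda_i)-F(\lambda_i-0)$, while boundary contributions are negligible because $|\partial\Lambda_n|/|\Lambda_n|\to 0$. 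Combined with the squeezing of the continuous part, this forces $G_n^\omega$ to perform its jump sharply at $\lambda_i$ and of the correct height, closing the remaining intervals to within $O(\epsilon)$ as well; letting $\epsilon\to 0$ along a countable sequence and intersecting the full-measure sets yields the claim.

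The main obstacle is precisely this jump control under \emph{long-range} interactions. Unlike the finite-hopping-range setting of \cite{LenzV-09}, here components of the percolation graph are not of bounded diameter and every off-diagonal entry may be nonzero, so the notion ``finitely supported eigenfunction fitting inside $\Lambda_n$'' is delicate: one must show that the density of eigenfunctions leaking across $\partial\Lambda_n$ is controlled \emph{uniformly in the energy}, exploiting the $\ell^1$-summability of $p$ together with the uniform variance bound on the $a_e$ to render the relevant off-diagonal sums trace-class and to make the boundary rank corrections $o(|\Lambda_n|)$. Establishing this uniform-in-$\lambda$ bound on the discrepancy between $F_n^\omega$ and the finite-volume count of genuinely localized eigenfunctions, and doing so with only real-valued, possibly continuously distributed weights so that the finite-local-complexity argument of \cite{Schwarzenberger-12} is unavailable, is the technically demanding step and is exactly the content of Theorem~\ref{theorem:technical}.
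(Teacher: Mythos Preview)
Your proposal is correct and follows essentially the same route as the paper: combine the weak convergence from Section~\ref{s:Weak} with the jump control of Theorem~\ref{theorem:technical}, then upgrade to uniform convergence by a monotonicity/partition argument. The only cosmetic difference is that the paper packages your explicit $\epsilon$-partition argument as a citation to the general measure-theoretic Lemma~\ref{lemma:LV4} (weak convergence plus $\rho_n(\{\lambda\})\to\rho(\{\lambda\})$ for all $\lambda$ implies uniform convergence of the distribution functions), whereas you spell out that lemma's proof and in fact use Theorem~\ref{theorem:technical} only at the countably many atoms of $F$ rather than at every $\lambda$.
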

\begin{Remark}
 The limit of the functions $F_n^\omega/|\Lambda_n|$ for $n\to\infty$ is called the \emph{integrated density of states} (IDS). Theorem \ref{theorem:main} shows that this limit exists in the topology of uniform convergence. 
Moreover, the theorem provides the equality of the IDS with the function $F$ given in \eqref{eq:defF}, 
which is the expectation of an diagonal element of associated spectral  projector. 
Note that by translation invariance we obtain for any finite $\Lambda\subseteq \ZZ^d$ 
\begin{equation}
 F(\lambda)=\EE\{ \sprod{E_H((-\infty,\lambda])\delta_0}{\delta_0} \}
= \frac{1}{|\Lambda|} \EE\Bigl\{\sum_{x\in\Lambda}\sprod{E_H((-\infty,\lambda])\delta_x}{\delta_x}\Bigr\}.
\end{equation}
Thus the IDS, originally obtained as a macroscopic limit, can be identified as an \emph{averged trace per unit volume}. An equality of this type is called \emph{Pastur-Shubin trace formula}. 

\end{Remark}

\subsection*{Extension to more general geometries}

In the presentation of our results we have not 
striven for the  maximal possible generality, 
but rather tried to present proofs as explicitly as possible in their most accessible form.
This concerns in particular the restriction to operators defined on $\ell^2(\ZZ^d)$.
In fact, the presented results carry over to operators on $\ell^2(\Gamma)$, where $\Gamma$ 
may be a much more general graph than the lattice $\ZZ^d$.
The explicit calculations in this case can be found in the submitted Thesis \cite{SchwarzenbergerDiss}.
Here we will only state only the scope of the general results:

 Let $G$ be a finitely generated discrete amenable group, and $\Gamma$ a graph on which $G$ acts freely and cocompactly by translations. Completely analogously as in the $\ZZ^d$ setting one can define long-range percolation on such graphs, as well as random operators introduced in Definition \eqref{eq:firstDef}, respectively \eqref{def:tildeH}.  
For such models the results which we use from \cite{FigotinP-92} can be proven analogously. 
Consequently, Lemma \ref{la:sa} and Theorem \ref{theorem:weak} have their generalized counterparts in this setting.
Relying on the ideas of \cite{LenzV-09} and \cite{Schwarzenberger-12} one can see that it is possible to extend the results of Section \ref{s:Jumps}, as well. Here one needs to apply the pointwise ergodic theorem of Lindenstrauss \cite{Lindenstrauss-01} instead of Theorem \ref{theorem:linde} to finally obtain that Theorem \ref{theorem:main}  holds analogously.

Let us note that the method we use here is in the sense efficient, 
that it does not need any condition beyond amenability, i.e. the existence of a F\o{}lner sequence,  
on the discrete group $G$.
In comparison to this, the method of \cite{Schwarzenberger-12} relies on the following additional 
tiling condition:

It is assumed that there exists a F\o{}lner sequence $(Q_n)_n$ such that for each $n\in\NN$ there is a set $T_n=T_n^{-1}\subseteq G$ with the property that $G$ is the disjoint union of the sets $\{Q_n t\mid t\in T_n\}$.
 This assumption is satisfied for many amenable groups, 
however it is not clear  whether it holds for all of them.

\subsection*{Low energy asymptotics}
In \cite{AntunovicV-09} the authors study low energy asymptotics of the IDS for the long-range percolation model. 
Again, let $p \in \ell^1(\ZZ^d)$ with $0\leq p(x)=p(-x)\leq 1 $ and $b_{\{x,y\}}$ be Bernoulli distributed with parameter $p(x-y)$. We define for each $\omega$ the set $E_\omega:=\{\{x,y\}\subseteq \ZZ^d\mid b_{\{x,y\}}(\omega)=1\}$.
The operator under consideration is the (non-weighted) Laplacian $\Delta_\omega$ of the graph $(\ZZ^d,E_\omega)$, i.e. the operator acting on any $\phi\colon \ZZ^d \to \CC$ with finite support by 
\[
(\Delta_\omega\phi)(x)= \sum_{\ato{y\neq x}{\{x,y\}\in E_\omega}} \left(\phi(x)-\phi(y)\right).
\]
 This equals the operator in \eqref{eq:firstDef} in the case $\alpha=0$, $\beta=1$ and where the random variables $a_e$ are constant $1$, see also \eqref{eq:defH2}

An in physical communities common way to introduce the long-range percolation model is the following:
For each pair of vertices $x,y\in \ZZ^d$ let $J_{x,y}$ be a non-negative real number such that
\begin{itemize}
 \item $J_{x,y}=J_{y,x}$
 \item $J_{x+z,y+z}=J_{x,y}$ for all $z\in \ZZ^d$,
 \item $J:=J_x:= \sum_{y\in \ZZ^d} J_{x,y}$ is finite and independent of $x\in \ZZ^d$.
\end{itemize}
We fix $\beta>0$ and declare an edge $\{x,y\}$ to be an element of $E_\omega$ with probability $1-\eul^{-\beta J_{x,y}}$. This gives the random graph $\Gamma_\omega=(\ZZ^d,E_\omega)$.
Notice that the probability that certain edge is an element of $E_\omega$ is increasing in $\beta$. 
Thus, the subcritical phase, in which all clusters are almost surely finite corresponds to small values of the parameter $\beta$ and the supercritical phase in which there exists almost surely an infinite cluster corresponds to large values of the parameter $\beta$. Just like in the case of the  nearest neighbor percolation model these two phases are separated by a single value of the parameter $\beta$. 
The authors of \cite{AntunovicV-09} define the IDS as in \eqref{eq:defF} and prove that for every subcritical $\beta$ there are constants $c(\beta),d(\beta)>0$ such that for $E>0$ small enough
\[
 \exp\left(-c(\beta) E^{-1/2}\right)\leq F(E)-F(0) \leq \exp\left(-d(\beta) E^{-1/2}\right).
\]
Actually the results of \cite{AntunovicV-09} apply to operators on quasi-transitive graphs $\Gamma$.
The present result is complementary to these observations, as we show that the finite volume approximants do actually converge to this limit function given by the Pastur-Shubin formula. 
Furthermore the combination of both results shows that even the approximating functions will exhibit 
exponential behavior for low energies.

Note that in this paper we introduce the long-range percolation model in another, but equivalent (see \cite{Schwarzenberger-12}) way, via a certain function $p\in\ell^1(\ZZ^d)$, 
see \eqref{eq:def:p}. More background on the models considered in \cite{AntunovicV-09} can be found in the review paper \cite{AntunovicV-08b}.

\section{Setting and first results}
\label{s:Setting}

Let $\Gamma$ be the $\ZZ^d$ lattice and denote by $d:\ZZ^d\times \ZZ^d\to \NN_0$ the graph distance in the lattice or equivalently the $\ell^1$-distance in $\ZZ^d$. With this metric we define the $R$-boundary of a set $\Lambda\subseteq \ZZ^d$ by
\[
 \partial^R\Lambda
:=\{x\in\Lambda\mid d(x,y)\leq R\text{ for some }y\in \ZZ^d\setminus\Lambda\}.
\]

 Furthermore we let $E:=\{\{x,y\}\subseteq \ZZ^d\mid x,y\in \ZZ^d\}$ be the set of all subsets of $\ZZ^d$ containing either one or two elements. The set $E$ can be interpreted as the edge set of the complete undirected graph over $\ZZ^d$, containing loops at each vertex.

The probability space $(\Omega,\cA,\PP)$ is given in the following way. The sample space is $\Omega=\prod_{e\in E} (\RR \times \{0,1\})$ and we denote the elements in $\Omega$ by $\omega=(\omega'_e,\omega''_e)_{e\in E}$.  The appropriate $\sigma$-algebra is $\cA=\bigotimes_{e\in E} (\cB(\RR) \otimes\cP(\{0,1\}))$.
In order to define a measure on this space we fix  some $p\in\ell^1(\ZZ^d)$ with
\begin{equation}\label{eq:def:p}
 0\leq p(x)=p(-x)\leq 1 \quad  \quad (x\in\ZZ^d)
\end{equation}
and for each $z\in \ZZ^d$ some probability measure $\mu_z$ on $\RR$ such that there is $v\in\RR$ with
\begin{equation}\label{eq:def:mu}
\int_{\RR}x^2 d\mu_z(x) \leq v^2 \quad \quad (z\in \ZZ^d).
\end{equation}
We set $\PP:= \bigotimes_{\{x,y\}\in E}(\mu_{x-y} \otimes \nu_{x-y})$ where for each $z\in \ZZ^d$ the measure $\nu_z$ is Bernoulli with parameter $p(z)$.

\begin{Remark}\label{rem:cylinder}
 The $\sigma$-algebra $\cA$ is generated by the cylinder sets $\cZ$, which are given the following way
\[
 \cZ=\left\{Z(A_{e_1},B_{e_1},\dots,A_{e_k},B_{e_k})\mid k\in\NN, e_i\in E, A_{e_i}\in\cB(\RR), B_{e_i}\in \cP(\{0,1 \}) \text{ for }i=1,\dots,k\right\}
\]
where
\[
 Z(A_{e_1},B_{e_1},\dots,A_{e_k},B_{e_k})=\left\{\omega\in\Omega\mid \omega_{e_i}'\in A_{e_i}, \omega_{e_i}''\in B_{e_i}\text{ for }i=1,\dots,k\right\}.
\]
\end{Remark}

Now for each $\omega=(\omega_e',\omega''_e)_{e\in E}$ and $e\in E$ we set $a_e(\omega):= \omega'_e$ and $b_e(\omega):=\omega''_e$.
This procedure gives independent random variables $a_e,b_e$, $e\in E$ satisfying $\PP(a_e\in B)=\mu_e(B)$ as well as $\PP(b_e=1)=\nu_e(\{1\})=p(x-y)$ for arbitrary $e=\{x,y\}\in E$ and $B\in \cB(\RR)$. 
Furthermore, by \eqref{eq:def:mu} we have for each $e\in E$
\begin{equation*}
\EE(|a_e|)
\leq v^2 +1 .
\end{equation*}
These random variables induce for each $\omega\in\Omega$ a graph $\Gamma_\omega=(\ZZ^d,E_\omega)$ with weighted edges. Here $\ZZ^d$ is the vertex set and $E_\omega$ is the subset of $E$, where an edge $e\in E$ is an element of $E_\omega$ if and only if $b_e(\omega)=1$. 
In this case one can think of $a_e(\omega)$ as the weight of the edge $e$. For a subset $\Lambda\subseteq\ZZ^d$ and an element $x\in \ZZ^d$ we write $x\stackrel{\omega}{\sim} \Lambda$ if there exists $y\in \Lambda$ with $\{x,y\}\in E_\omega$.


The following Lemma shows that $\Gamma_\omega$ is almost surely locally finite, i.e. with probability one each vertex is incident to only finitely many edges in $\Gamma_\omega$. 
\begin{Lemma}\label{lemma:locfin}
The graph $\Gamma_\omega$ is locally finite for almost all $\omega\in\Omega$.
\end{Lemma}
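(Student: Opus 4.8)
The plan is to reduce local finiteness to a statement about the degree of a single, fixed vertex, and then to pass from ``almost surely for each fixed vertex'' to ``almost surely for all vertices simultaneously'' using the countability of $\ZZ^d$.

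First I would fix $x\in\ZZ^d$ and consider the random degree of $x$ in $\Gamma_\omega$, which, apart from the possible loop at $x$, is governed by the random variable
\[
 N_x(\omega):=\sum_{y\in\ZZ^d\setminus\{x\}} b_{\{x,y\}}(\omega).
\]
Since the loop can contribute at most one further incident edge, the vertex $x$ is incident to only finitely many edges in $\Gamma_\omega$ if and only if $N_x(\omega)<\infty$. Using $\PP(b_{\{x,y\}}=1)=p(x-y)$ together with the substitution $z=x-y$ and the assumption $p\in\ell^1(\ZZ^d)$ from \eqref{eq:def:p}, I would compute
\[
 \EE(N_x)=\sum_{y\neq x} p(x-y)=\sum_{z\neq 0} p(z)\leq \|p\|_{\ell^1}<\infty.
\]
Because $N_x$ is a nonnegative random variable with finite expectation, it is finite for almost every $\omega$; equivalently, by the first Borel--Cantelli lemma applied to the events $\{b_{\{x,y\}}=1\}$, whose probabilities are summable, only finitely many of these events occur almost surely. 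Either route yields a set $\Omega_x\subseteq\Omega$ of full measure on which $x$ has finite degree.

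Finally I would set $\Omega_0:=\bigcap_{x\in\ZZ^d}\Omega_x$. As a countable intersection (indexed by the countable set $\ZZ^d$) of sets of full measure, the set $\Omega_0$ again has full measure, and by construction every vertex of $\Gamma_\omega$ has finite degree for each $\omega\in\Omega_0$. Hence $\Gamma_\omega$ is locally finite for almost all $\omega$, as claimed. The argument presents no serious obstacle: the only point requiring care is that the decisive summability input is exactly the hypothesis $p\in\ell^1(\ZZ^d)$, and that independence of the $b_e$ is not actually needed here, since both the finite-expectation argument and the first Borel--Cantelli lemma apply to arbitrary families of events.
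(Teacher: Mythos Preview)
Your proof is correct and follows essentially the same approach as the paper: fix a vertex, use the summability $p\in\ell^1(\ZZ^d)$ together with the first Borel--Cantelli lemma (or, equivalently, finiteness of the expected degree) to obtain a full-measure set on which that vertex has finite degree, and then intersect over the countable set $\ZZ^d$. Your additional remarks that the loop contributes at most one edge and that independence is not needed are accurate refinements but do not change the argument.
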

\begin{proof}
 Fix an element $x\in\ZZ^d$ and consider the events $A_y:=\{ b_{\{x,y\}}=1\}$, $y\in \ZZ^d$. Then clearly 
\[
\sum_{y\in\ZZ^d}\PP(A_y)=\sum_{y\in\ZZ^d} p(x-y) <\infty,
\]
as $p\in\ell^1(\ZZ^d)$.
Hence, the Borel-Cantelli Lemma gives a set $\Omega_x$ of full measure such that each $\omega\in\Omega_x$ is contained in only finitely many $A_y$, $y\in \ZZ^d$. As $\ZZ^d$ is countable $\tilde \Omega:=\bigcap_{x\in\ZZ^d}\Omega_x$ is a set of full measure as well. Furthermore $\Gamma_\omega$ is locally finite for all $\omega\in\tilde\Omega$.
\end{proof}

Given $\gamma\in\ZZ^d$, let us define translations $T_\gamma:\Omega\to \Omega$ by 
\[
 T_\gamma(\omega)
=T_\gamma((\omega'_e,\omega''_e)_{e\in E})
=(\omega'_{e+\gamma},\omega''_{e+\gamma})_{e\in E}
\]
where for $e=\{g,h\}\in E$ we mean by $e+\gamma$ the element $\{g+\gamma,h+\gamma\}\in E$. For $\gamma\in \ZZ^d$ and $B\in\cA$ we denote the image and the preimage of $B$ under $T_\gamma$ by 
\[T_\gamma(B)=\{T_\gamma(\omega) \in\Omega \mid \omega\in B\}
\quad\text{and}\quad
T_\gamma^{-1}(B)=\{\omega \in\Omega\mid T_\gamma(\omega)\in B \}.
\]
Note that for $B\in \cA$ we have $T_\gamma^{-1}(B)=T_{\gamma^{-1}}(B)$. We further define $T$ to be the mapping $\gamma\mapsto T_\gamma$ which maps each element of $\ZZ^d$ into the space of automorphisms on $(\Omega,\cA,\PP)$. Note that by definition $T$ is ergodic if and only if for any $B\in\cA$ with $T_\gamma(B)=B$ for all $\gamma\in\ZZ^d$ one has $\PP(B)\in\{0,1\}$.
The following result is basic, but we do not know an explicit reference in the literature, so we include a proof for completeness sake.

\begin{Lemma}\label{lemma:Tergodic}
$T$ is a measure preserving, ergodic left-action on $(\Omega,\cA,\PP)$.
\end{Lemma}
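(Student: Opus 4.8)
The plan is to establish the three required properties — that each $T_\gamma$ is measure preserving, that $T$ is a left-action, and that $T$ is ergodic — working from the explicit product structure of $(\Omega,\cA,\PP)$ and exploiting the cylinder sets described in Remark \ref{rem:cylinder}.

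First I would verify that each $T_\gamma$ preserves $\PP$. Because $\cA$ is generated by the cylinder sets $\cZ$, which form a $\pi$-system, it suffices by a standard uniqueness-of-measures argument to check $\PP(T_\gamma^{-1}(Z))=\PP(Z)$ on these generators. For a cylinder $Z(A_{e_1},B_{e_1},\dots,A_{e_k},B_{e_k})$, the map $T_\gamma$ simply relabels coordinates by $e_i\mapsto e_i-\gamma$, so $T_\gamma^{-1}(Z)$ is again a cylinder with the edges shifted. The point is that the factor measure attached to an edge $e=\{x,y\}$ is $\mu_{x-y}\otimes\nu_{x-y}$, and since shifting both endpoints by $\gamma$ leaves the difference $x-y$ unchanged, the shifted edge carries exactly the same factor measure. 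Hence the product of factor probabilities is unchanged, giving $\PP(T_\gamma^{-1}(Z))=\PP(Z)$.

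Next I would check the algebraic left-action property, namely $T_{\gamma}\circ T_{\delta}=T_{\gamma+\delta}$ together with $T_0=\id$. This is a direct computation on coordinates: applying the definition twice shifts the index of $\omega$ by $\delta$ and then by $\gamma$, yielding a net shift by $\gamma+\delta$, so $T$ is a homomorphism from $(\ZZ^d,+)$ into the automorphism group. Measurability of each $T_\gamma$ and its inverse follows again from the fact that preimages of cylinders are cylinders.

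The main obstacle is ergodicity. Here I would use the independence built into the product measure $\PP$ via a Kolmogorov-type zero-one argument adapted to the translation structure. The strategy is to show that any $T$-invariant event $B\in\cA$ has $\PP(B)\in\{0,1\}$ by approximating $B$ by cylinder events. Given $\epsilon>0$, choose a cylinder $Z$ depending only on finitely many edges $e_1,\dots,e_k$ with $\PP(B\triangle Z)<\epsilon$. Since only finitely many edges are involved, one can pick $\gamma\in\ZZ^d$ large enough that the translated edges $e_1+\gamma,\dots,e_k+\gamma$ are disjoint from $e_1,\dots,e_k$, so that $Z$ and $T_\gamma^{-1}(Z)$ depend on disjoint coordinate blocks and are therefore independent. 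Using invariance of $B$ under $T_\gamma$, the measure-preserving property just proved, and the approximation bound, I would estimate
\[
\PP(B)=\PP(B\cap T_\gamma^{-1}B)\approx \PP(Z\cap T_\gamma^{-1}Z)=\PP(Z)\,\PP(T_\gamma^{-1}Z)=\PP(Z)^2\approx\PP(B)^2,
\]
where each $\approx$ incurs an error controlled by $\epsilon$. Letting $\epsilon\to 0$ forces $\PP(B)=\PP(B)^2$, hence $\PP(B)\in\{0,1\}$. The delicate point is making the two approximation steps rigorous simultaneously — controlling $\PP(B\cap T_\gamma^{-1}B)-\PP(Z\cap T_\gamma^{-1}Z)$ by the triangle inequality for symmetric differences while keeping the independence exact — but this is a routine bookkeeping step once the disjointness of the coordinate blocks is arranged.
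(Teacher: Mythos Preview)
Your proposal is correct and follows essentially the same route as the paper: verify the action property by direct computation, check measure preservation on cylinder sets using that the factor measures depend only on the difference $x-y$, and prove ergodicity by approximating an invariant event with a finite-coordinate event, translating far enough to force independence, and concluding $\PP(B)=\PP(B)^2$. The only cosmetic difference is that the paper approximates by a \emph{finite union} of cylinder sets (since $\cZ$ is merely a semiring), whereas you speak of a single cylinder; your argument goes through verbatim once you allow $Z$ to be such a finite union, which still depends on only finitely many edges.
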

\begin{proof}
For an edge $e=\{g,h\}\in E$, vertices $x,y\in\ZZ^d$ and $\omega \in \Omega$ we have $T_0(\omega)=\omega$ and
\begin{align*}
 T_{x+y}(\omega)  
= (\omega'_{e+x+y},\omega''_{e+x+y})_{e\in E} 
= T_x(T_y(\omega))
\end{align*}
which shows that $T$ is a left action of $\ZZ^d$ on $\Omega$.
  
By definition of $\PP$ and the random variables $a_e$ and $b_e$ we have $\PP(a_e \in B)=\PP(a_{e+\gamma}\in B)$ as well as $\PP(b_e=1)=\PP(b_{e+\gamma}=1)$ for any $e\in E$, $\gamma\in \ZZ^d$ and $B\in\cB(\RR)$. 
Furthermore, as $T_\gamma$ is a translation, $\PP(Z)=\PP(T_\gamma(Z))$ holds obviously for any $\gamma\in \ZZ^d$ and any cylinder set $Z\in \cZ$, which implies the same property for any set $B\in \cA$, c.f. Remark \ref{rem:cylinder}.

To prove ergodicity let $B\in\cA$ with $B=T_\gamma(B)$ for all $\gamma\in\ZZ^d$ and $\PP(B)>0$ be given. 
We need to show that this implies $\PP(B)=1$. 
In the following we apply the approximation lemma for measures, 
which belongs to the entourage of Carath\'eodory's extension theorem, 
cf.~e.g.{} Theorem 1.65 in \cite{Klenke-08}.
Let $\epsilon>0$. 
As $B\in\cA=\sigma(\cZ)$ and as $\cZ$ is a semiring we can find cylinder sets $Z_1,\dots,Z_n\in \cZ$ such that
\[
 \PP(B\triangle  Z)<\epsilon\quad\text{where}\quad Z:=\bigcup_{k=1}^n Z_k,
\]
which gives
\begin{equation}\label{eq1}
 \PP(B)^2 -2\PP(B)\epsilon  \leq\PP(Z)^2\leq \PP( B)^2 +2\PP(B)\epsilon +\epsilon^2.
\end{equation}
Furthermore we have for any $\gamma\in \ZZ^d$
\begin{align*}
 \PP(Z\cap T_\gamma Z)
&\leq  \PP\left(( B\cup (Z\setminus B))\cap T_\gamma Z\right)\\
&\leq  \PP\left( B\cap T_\gamma Z\right)  + \PP\left(  (Z\setminus B)\cap T_\gamma Z\right)\\
&\leq  \PP\left( B\cap (T_\gamma B\cup (T_\gamma Z\setminus T_\gamma B)  )\right)  + \epsilon\\
&\leq  \PP\left( B\cap T_\gamma B\right) + \PP\left( B \cap (T_\gamma Z\setminus T_\gamma B)  \right)  + \epsilon\\
&\leq  \PP\left( B\cap T_\gamma B\right)  + 2\epsilon
\end{align*}
By symmetry we get for all $\gamma\in \ZZ^d$
\[ \PP(B\cap T_\gamma B)-2\epsilon \leq \PP( Z\cap T_\gamma Z) \leq \PP(B\cap T_\gamma B)+2\epsilon \]
and the $T$-invariance of $B$ implies
\begin{equation}\label{eq2}
 \PP(B)-2\epsilon \leq \PP( Z\cap T_\gamma  Z) \leq \PP(B)+2\epsilon.
\end{equation}
 As $ Z$ is a finite union of cylinder sets, 
it does only depend on finitely many edges. Hence there exists an element $h\in \ZZ^d$ such that $ Z$ and $T_h Z$ are independent, which gives
\[
 \PP( Z\cap T_h Z)=\PP( Z)\PP(T_h Z) = \PP( Z)^2
\]
since $T$ is measure preserving. This gives together with (\ref{eq1}) and (\ref{eq2})
\[
 \PP(B)-2\PP(B)\epsilon-\epsilon^2-2\epsilon \leq \PP(B)^2\leq \PP(B)+ 2\PP(B)\epsilon +2\epsilon
\]
and dividing by $\PP(B)>0$ leads to
\[
 1-2\epsilon-\frac{\epsilon^2+2\epsilon}{\PP(B)} \leq \PP(B)\leq 1+ 2\epsilon + \frac{2\epsilon}{\PP(B)}
\]
As these inequalities hold for arbitrary $\PP(B)\geq\epsilon >0$ we get $\PP(B)=1$.
\end{proof}

Denote by $\ell^2(\ZZ^d)$ all square summable, complex-valued functions on $\ZZ^d$ and by $C_c(\ZZ^d)$ the subset of $\ell^2(\ZZ^d)$ 
consisting of all finitely supported functions.
Let $\alpha,\beta\in [0,1]$ be some fixed numbers. 
Using the random variables $a_e,b_e$, $e\in E$ we define for each $\omega\in \tilde\Omega$ as in Lemma \ref{lemma:locfin} the random 
operator $\tilde H(\omega):=\tilde H^{\alpha,\beta}(\omega):C_c(\ZZ^d)\to\ell^2(\ZZ^d)$ point-wise by
\[
 \tilde H_{x,y}(\omega):= \tilde H_{x,y}^{\alpha,\beta}(\omega) := \begin{cases}
           a_{\{x,y\}}(\omega) b_{\{x,y\}}(\omega) &\text{ if } x\neq y, \\
   \alpha a_{\{x\}}(\omega)b_{\{x\}}(\omega)-\beta\sum_{z\neq x} a_{\{x,z\}}(\omega)b_{\{x,z\}}(\omega) &\text{ if } x=y
          \end{cases}
 \]
and for $\phi\in C_c(\ZZ^d)$ we set
\begin{equation}\label{def:tildeH}
 (\tilde H(\omega) \phi )(x) := (\tilde H^{\alpha,\beta}(\omega) \phi )(x) :=\sum_{y\in \ZZ^d}\tilde H_{x,y}(\omega)\phi(y).
\end{equation}
It is easy to see that
\begin{equation}
\label{eq:defH2}
 (\tilde H(\omega)\phi)(x) = \sum_{\ato{y\neq x}{\{x,y\}\in E_\omega}} \left(\phi(y)- \beta\phi(x)\right) a_{\{x,y\}}(\omega) + \alpha \phi(x) a_{\{x\}}(\omega).
\end{equation}
Using this we obtain for each $\phi\in C_c(\ZZ^2)$ and $\omega\in\Omega$ such that $\Gamma_\omega$ is locally finite that $\tilde H(\omega)\phi\in \ell^1(\ZZ^d)\subseteq\ell^2(\ZZ^d) $. To see this we set $A:=\supp \phi$, $m:=\max_{x\in A}|\phi(x)|$ and $N_y(\omega):=\{x\in \ZZ^d\mid \{x,y\}\in E_\omega \}$ to estimate
\begin{align*}
 \sum_{x\in\ZZ^d}\left| \sum_{y\in \ZZ^d} \tilde H_{x,y}(\omega)\phi(y)  \right|
&\leq 
\sum_{x\in\ZZ^d} \sum_{y\in A} \left|\tilde H_{x,y}(\omega)\right| \left|\phi(y)  \right|
\leq
 m \sum_{x\in\ZZ^d} \sum_{y\in A} \left|\tilde H_{x,y}(\omega)\right| \\
&\leq
 m \sum_{y\in A} \sum_{x\in N_y(\omega)} \left|\tilde H_{x,y}(\omega)\right| 
\leq
 m \sum_{y\in A} \left(|\tilde H_{y,y}(\omega)|+\sum_{\ato{x\in N_y(\omega)}{x\neq y}} \left|\tilde H_{x,y}(\omega)\right| \right) \\
&\leq
 m \sum_{y\in A} \left(|a_{\{y\}}(\omega)|+2\sum_{\ato{x\in N_y(\omega)}{x\neq y}} \left|\tilde H_{x,y}(\omega)\right| \right)
<\infty.
\end{align*}
 Note that here we used that $N_y(\omega)$ is finite, as the underlying graph $\Gamma_\omega$ is locally finite. 
In the sense of \cite[Section $\S$.1.B]{FigotinP-92} the mapping 
\[
 \tilde H:\Omega\to L(\ell^2(\ZZ^d)),\quad  \omega\mapsto
\begin{cases}
\tilde H(\omega)& \text{if }\omega\in \tilde\Omega,\\
\Id & \text{else.}
\end{cases}
\]
is a random operator with domain $C_c(\ZZ^d)$. 
This means that almost surely $C_c(\ZZ^d)$ is a subset of the domain of $\tilde H$ and almost surely $\tilde H u$ is for all $u\in C_c(\ZZ^d)$ a random vector.  Note that here $L(\ell^2(\ZZ^d))$ is the space of the linear operators which are densely defined in $\ell^2(\ZZ^d)$.

\begin{Remark}
\label{r:alphabeta}
 The operator $\tilde H(\omega)$ depends on the choice of $\alpha,\beta\in [0,1]$ and is defined on the finitely supported functions in $\ell^2(\ZZ^d)$. In Lemma \ref{la:sa} we will define the self-adjoint extension $H(\omega)$ of this operator. Depending on $\alpha$ and $\beta$ we have in particular the following special cases for $H(\omega)$:
\begin{itemize}
 \item if $\alpha=0$ and $\beta=1$, then $H(\omega)$ is the randomly weighted Laplacian on the graph $\Gamma_\omega$,
 \item if $\alpha=\beta=1$, then $H(\omega)$ is the randomly weighted Laplacian on the graph $\Gamma_\omega$ plus a random diagonal,
 \item if $\alpha=1$ and $\beta=0$, then $H(\omega)$ is the randomly weighted adjacency operator of $\Gamma_\omega$ with a random diagonal,
 \item if $\alpha=\beta=0$, then $H(\omega)$ is the randomly weighted adjacency operator of $\Gamma_\omega$ with zeros on the diagonal.
\end{itemize}
 The diagonal elements which appear if $\alpha>0$ can be interpreted, either as random weights on the loops or as a random potential. For values $\alpha,\beta\in(0,1)$ the operator can be seen as an interpolation between, the adjacency operator and the Laplacian respectively Schr\"odinger operator of the graph $\Gamma_\omega$.
\end{Remark}

We will use the same symbol $T_\gamma$ for a mapping $T_\gamma:L(\ell^2(\ZZ^d))\to L(\ell^2(\ZZ^d))$ defined by
\[
  T_\gamma((A_{x,y})_{x,y\in\ZZ^d}):= (A_{x+\gamma,y+\gamma})_{x,y\in\ZZ^d},
\]
for arbitrary $A=(A_{x,y})_{x,y\in \ZZ^d}\in L(\ell^2(\ZZ^d))$. We set $U_\gamma:\ell^2(\ZZ^d)\to\ell^2(\ZZ^d)$ 
\[
 U_\gamma((\phi(x))_{x\in\ZZ^d}):=(\phi(x+\gamma))_{x\in\ZZ^d}
\]
where $\phi=(\phi(x))_{x\in \ZZ^d}$ is arbitrary. Then obviously $T_\gamma (A) = U_\gamma A U_\gamma^{-1}$.

For each $x,y,\gamma\in \ZZ^d$ with $x\neq y$ and $\omega= (\omega_e',\omega''_e)_{e\in E}$ we set $s:=\{x,y\}$ and have
\begin{align*}
 \tilde H_{x,y}(T_\gamma (\omega))
&=a_s (T_\gamma(\omega)) b_s (T_\gamma(\omega)) \\
&=a_s ( (\omega_{e+\gamma}',\omega_{e+\gamma}'')_{e\in E} ) b_s ((\omega_{e+\gamma}',\omega_{e+\gamma}'')_{e\in E}) \\
&=\omega_{s+\gamma}' \cdot \omega_{s+\gamma}'' \\
&=\omega_{\{x+\gamma,y+\gamma\}}' \cdot \omega_{\{x+\gamma,y+\gamma\}}'' \\
&=a_{\{x+\gamma,y+\gamma\}}(\omega)   \cdot b_{\{x+\gamma,y+\gamma\}}(\omega) = \tilde H_{x+\gamma,y+\gamma}(\omega)
\end{align*}
Furthermore we have for the diagonal elements
\begin{align*}
 \tilde H_{x,x}(T_\gamma (\omega))
&= \alpha a_{\{x\}}(T_\gamma (\omega))b_{\{x\}}(T_\gamma (\omega))-\beta\sum_{z\neq x} a_{\{x,z\}}(T_\gamma (\omega))b_{\{x,z\}}(T_\gamma (\omega))\\
&= \alpha a_{\{x+\gamma\}}(\omega)b_{\{x+\gamma\}}(\omega) -\beta\sum_{z\neq x} a_{\{x+\gamma,z+\gamma\}}(\omega)b_{\{x+\gamma,z+\gamma\}}(\omega)
=\tilde H_{x+\gamma,x+\gamma}(\omega).
\end{align*}
Therefore we have 
\begin{equation}\label{eq:transl}
 \tilde H(T_\gamma(\omega))=T_{\gamma}(\tilde H(\omega)).
\end{equation}

 \begin{Definition} 
 Let $A$ be a random operator mapping each element of the probability space $(\Omega,\cA,\PP)$ to an linear operator on the Hilbert space $\cH$. Then $A$ is called \emph{metrically transitive}, if there exists a group $\cT$ of measure preserving automorphisms of $(\Omega,\cA,\PP)$, a group of unitary operators $\cU:=\{U_T\mid T\in\cT\}$ on $\cH$ and a homomorphism from $\cT$ to $\cU$ such that
 \begin{equation}\label{eq:mt_erg}
  B\in \cA \text{ such that } TB=B \text{ for all }T\in\cT \quad \Rightarrow \quad \PP(B)\in\{0,1\}  
 \end{equation}
 and one has for all $\omega\in\Omega$ and all $T\in\cT$ the relation
\begin{equation}\label{eq:mt_transl}
 A(T\omega)= U_T A(\omega)U_T^{-1}.
\end{equation}
\end{Definition}

The next aim is to prove that $\tilde H$ is essentially selfadjoint and that $\tilde H$ and its selfadjoint extension are metrically transitive.

\begin{Lemma}\label{la:sa}
Let $(\Omega,\cA,\PP)$ and the random operator $\tilde H$ be given as above. Then
\begin{enumerate}[(a)]
 \item there exists a set $\Omega'$ of full measure such that for each $\omega\in\Omega'$ the operator $\tilde H(\omega)$ is essentially self-adjoint. We denote the closure of $\tilde H(\omega)$ by $H_\omega$ and its domain by $D(\omega)$.
 \item the random operators $\tilde H$ and 
\[
H:\Omega\to L(\ell^2(\ZZ^d))\quad\text{given by}\quad
 H(\omega)= \begin{cases}
                H_\omega& \text{ if }\omega\in \Omega' \\
                \Id & \text{ otherwise}
               \end{cases}
\]
are metrically transitive. Here $\Id$ is the identity operator in $L(\ell^2(\ZZ^d))$.
\end{enumerate}
\end{Lemma}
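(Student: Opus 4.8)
The plan is to treat part (a), the essential self-adjointness, as the heart of the matter and to deduce part (b) from it together with the covariance identity \eqref{eq:transl} and the ergodicity already established in Lemma \ref{lemma:Tergodic}. First I would record that for every $\omega$ in the full-measure set $\tilde\Omega$ of Lemma \ref{lemma:locfin} the operator $\tilde H(\omega)$ is symmetric on $C_c(\ZZ^d)$: the matrix $(\tilde H_{x,y}(\omega))$ is real and symmetric, since $\tilde H_{x,y}=a_{\{x,y\}}b_{\{x,y\}}=\tilde H_{y,x}$ for $x\neq y$ and the diagonal is real, and by local finiteness each row has finite support, so that the identity $\langle \tilde H(\omega)\phi,\rho\rangle=\langle\phi,\tilde H(\omega)\rho\rangle$ for $\phi,\rho\in C_c(\ZZ^d)$ reduces to a finite, freely rearrangeable double sum. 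Thus $\tilde H(\omega)$ is a densely defined symmetric operator, and the question is whether its deficiency indices $n_\pm(\omega):=\dim\ker(\tilde H(\omega)^\ast\mp i)$ vanish almost surely.

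For the vanishing I would argue in two steps. Step one: the deficiency indices are almost surely constant. Taking adjoints in \eqref{eq:transl} and using $T_\gamma(A)=U_\gamma A U_\gamma^{-1}$ with $U_\gamma$ unitary gives $\tilde H(T_\gamma\omega)^\ast=U_\gamma\,\tilde H(\omega)^\ast\,U_\gamma^{-1}$, whence $\ker(\tilde H(T_\gamma\omega)^\ast\mp i)=U_\gamma\ker(\tilde H(\omega)^\ast\mp i)$ and $n_\pm(T_\gamma\omega)=n_\pm(\omega)$. Hence $\omega\mapsto n_\pm(\omega)$ is a measurable $T$-invariant function and is therefore almost surely constant by the ergodicity of $T$ (Lemma \ref{lemma:Tergodic}). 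Step two: this constant is $0$. The quantitative input is the second-moment bound $\EE\{\|\tilde H(\omega)\delta_0\|^2\}<\infty$, which follows from $p\in\ell^1(\ZZ^d)$ in \eqref{eq:def:p} and the uniform variance bound \eqref{eq:def:mu}: the off-diagonal contribution is $\sum_{x\neq 0}\EE\{a_{\{x,0\}}^2\}\,p(x)\leq v^2\|p\|_1$, and the diagonal term $\tilde H_{0,0}$ has finite second moment by the same two inputs. Together with the covariance \eqref{eq:transl} this places $\tilde H$ in the class of metrically transitive operators to which an essential self-adjointness criterion for covariant operators (as developed in \cite{FigotinP-92}) applies, yielding $n_\pm=0$ almost surely. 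I would invoke such a criterion rather than attempt a direct computation, because the naive cut-off argument does not close: testing a solution of $\tilde H\psi=i\psi$ against the indicators $\one_{\Lambda_n}$ leads to $\|\one_{\Lambda_n}\psi\|^2=-\operatorname{Im}\langle\one_{\Lambda_n}\psi,[\tilde H,\one_{\Lambda_n}]\psi\rangle$, and the boundary commutator term is bounded by $\sum_{x\in\Lambda_n,\,y\notin\Lambda_n}|\psi(x)|\,|\tilde H_{x,y}|\,|\psi(y)|$, which one cannot force to zero along a subsequence for an arbitrary $\ell^2$ solution when the hopping is genuinely long-range and the weighted degrees are almost surely unbounded.

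This last point is exactly where I expect the main difficulty to lie: for finite-hopping-range operators the boundary commutator is a short-range expression and essential self-adjointness is elementary, whereas here one must exploit the probabilistic structure, namely $p\in\ell^1$ together with the uniform second moment \eqref{eq:def:mu}, rather than deterministic geometry, to rule out nontrivial deficiency.

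Finally, for part (b) I would verify the three defining properties of metric transitivity directly. The group $\cT:=\{T_\gamma\mid\gamma\in\ZZ^d\}$ acts by measure-preserving automorphisms (Lemma \ref{lemma:Tergodic}), the shifts $U_\gamma$ form a unitary group with $\gamma\mapsto U_\gamma$ a homomorphism, the implication \eqref{eq:mt_erg} is precisely the ergodicity statement of Lemma \ref{lemma:Tergodic}, and the covariance \eqref{eq:mt_transl} for $\tilde H$ is \eqref{eq:transl} combined with $T_\gamma(A)=U_\gamma A U_\gamma^{-1}$. For the closed operator $H$ I would first note that the set $\Omega'$ from part (a) may be chosen $T$-invariant, since unitary conjugation preserves essential self-adjointness and local finiteness is itself shift-invariant; then on $\Omega'$ the covariance passes to the closures, giving $H(T_\gamma\omega)=U_\gamma H(\omega)U_\gamma^{-1}$, while off $\Omega'$ both sides equal $\Id$, so \eqref{eq:mt_transl} holds for $H$ on all of $\Omega$.
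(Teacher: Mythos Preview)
Your overall strategy coincides with the paper's: establish metric transitivity of $\tilde H$ from Lemma~\ref{lemma:Tergodic} and \eqref{eq:transl}, then feed a moment condition into Figotin's essential self-adjointness criterion (stated in the paper as Theorem~\ref{Fig1}), and finally transfer covariance to the closure $H$. Your part (b) is essentially identical to the paper's, with the nice added observation that $\Omega'$ can be taken $T$-invariant. Your ``Step one'' on almost-sure constancy of $n_\pm$ is correct but redundant once you invoke Figotin's theorem, since that is already built into it.

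There is, however, a genuine gap in Step two. The quantitative hypothesis you verify is
\[
\EE\bigl\{\|\tilde H(\omega)\delta_0\|^2\bigr\}=\EE\Bigl\{\sum_{x\in\ZZ^d}|\tilde H_{0,x}|^2\Bigr\}<\infty,
\]
whereas the Figotin criterion you invoke (Theorem~\ref{Fig1}, i.e.\ condition~\eqref{Fig1a}) requires the strictly stronger bound
\[
\EE\Bigl\{\Bigl(\sum_{x\in\ZZ^d}|\tilde H_{0,x}|\Bigr)^{\!2}\Bigr\}<\infty.
\]
Your off-diagonal computation $\sum_{x\neq 0}\EE\{a_{\{0,x\}}^2\}p(x)\leq v^2\|p\|_1$ only controls the diagonal of the double sum $\sum_{x,y}\EE\{|\tilde H_{0,x}||\tilde H_{0,y}|\}$; the cross terms are not touched. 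The paper closes this by first reducing via $|\tilde H_{0,0}|\leq|a_{\{0\}}b_{\{0\}}|+\sum_{z\neq 0}|a_{\{0,z\}}|b_{\{0,z\}}$ to the estimate of $\EE\bigl\{(\sum_{x}|a_{\{0,x\}}|b_{\{0,x\}})^2\bigr\}$, then writing this as $\EE\bigl\{|N|\sum_{x\in N}|a_{\{0,x\}}|^2\bigr\}$ with $N(\omega)=\{x:b_{\{0,x\}}(\omega)=1\}$ and exploiting the independence of $a_{\{0,x\}}$ from the Bernoulli variables together with $\EE|N|=\|p\|_1$, to obtain the bound $v^2(\|p\|_1^2+\|p\|_1)$. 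The same inputs you list ($p\in\ell^1$ and \eqref{eq:def:mu}) suffice, but you must actually run this argument; the $\ell^2$ bound alone does not trigger the black box.
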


To prove the Lemma we will make use of the following theorem due to Figotin \cite{Figotin-87}, see also \cite{FigotinP-92}.
\begin{Theorem}[\cite{Figotin-87}, \cite{FigotinP-92}]
\label{Fig1} Let $(\Omega,\cA,\PP)$ be a probability space and $A$ a metrically transitive random operator with domain $C_c(\ZZ^d)$ satisfying
\begin{equation}\label{Fig1a}
 \EE\left( \bigg( \sum_{x\in\ZZ^d} |A_{0,x}| \bigg)^2 \right)<\infty.
\end{equation}
Then the operator $A(\omega)$ is for almost all $\omega\in\Omega$ essentially self-adjoint.
\end{Theorem}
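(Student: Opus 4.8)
The plan is to establish essential self-adjointness by the standard route of showing that the deficiency subspaces $\mathcal{N}_\pm(\omega):=\ker\bigl(A(\omega)^\ast\mp i\bigr)$ are trivial for almost every $\omega$, extracting this triviality from the ergodic structure together with the moment bound \eqref{Fig1a}. Throughout, $A(\omega)$ is symmetric (its matrix is Hermitian), as is implicit in the notion of essential self-adjointness. First I would record the preliminaries. Because $A$ is symmetric with dense domain $C_c(\ZZ^d)$, the adjoint $A(\omega)^\ast$ acts as the formal matrix multiplication $\psi\mapsto\bigl(\sum_y A_{x,y}(\omega)\psi(y)\bigr)_x$ on those $\psi\in\ell^2(\ZZ^d)$ for which the result lies in $\ell^2(\ZZ^d)$. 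Now \eqref{Fig1a} gives $\sum_x|A_{0,x}|\in L^2(\Omega)\subseteq L^1(\Omega)$; by metric transitivity the row sum $R_y(\omega):=\sum_x|A_{y,x}(\omega)|$ equals $R_0(T_y\omega)$ and is therefore integrable for each fixed $y$, so by countability of $\ZZ^d$ there is a set of full measure on which \emph{all} row sums are finite. On this set $A(\omega)$ maps $C_c(\ZZ^d)$ into $\ell^1(\ZZ^d)\subseteq\ell^2(\ZZ^d)$ and the above description of $A(\omega)^\ast$ is rigorous.

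Next I would exploit metric transitivity. From the covariance relation \eqref{eq:mt_transl} one gets $A(T_\gamma\omega)^\ast=U_\gamma A(\omega)^\ast U_\gamma^{-1}$, hence $\mathcal{N}_\pm(T_\gamma\omega)=U_\gamma\mathcal{N}_\pm(\omega)$, and the orthogonal projections $P_\pm(\omega)$ onto $\mathcal{N}_\pm(\omega)$ satisfy $P_\pm(T_\gamma\omega)=U_\gamma P_\pm(\omega)U_\gamma^{-1}$. In particular $\omega\mapsto\dim\mathcal{N}_\pm(\omega)$ is a $T$-invariant measurable function, hence almost surely equal to a constant $n_\pm\in\{0,1,\dots,\infty\}$ by ergodicity (Lemma \ref{lemma:Tergodic}), and the diagonal $\sprod{P_\pm(\omega)\delta_x}{\delta_x}$ has an $x$-independent expectation $\rho_\pm:=\EE\{\sprod{P_\pm(\cdot)\delta_0}{\delta_0}\}$. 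The problem is thereby reduced to proving $\rho_\pm=0$, equivalently $n_\pm=0$.

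To reach this I would use a discrete Green's identity. For $\psi\in\mathcal{N}_+(\omega)$ (so $A(\omega)^\ast\psi=i\psi$) and a finite box $\Lambda$, put $\phi:=\one_\Lambda\psi\in C_c(\ZZ^d)$. Since $\sprod{A\phi}{\phi}$ is real by symmetry while $\sprod{A\phi}{\psi}=\sprod{\phi}{A^\ast\psi}$ is purely imaginary of modulus $\|\one_\Lambda\psi\|^2$, taking imaginary parts and splitting $\psi=\one_\Lambda\psi+\one_{\Lambda^c}\psi$ yields the flux bound
\[
\|\one_\Lambda\psi\|^2\;\le\;\sum_{x\notin\Lambda}\sum_{y\in\Lambda}|A_{x,y}(\omega)|\,|\psi(x)|\,|\psi(y)|.
\]
Choosing $\Lambda=\Lambda_n$ and letting $n\to\infty$, the right-hand side is the cross part of the double sum $S(\omega):=\sum_{x,y}|A_{x,y}(\omega)|\,|\psi(x)|\,|\psi(y)|$, which by dominated convergence tends to $0$ as soon as $S(\omega)<\infty$; this would force $\|\psi\|=0$ and hence $n_+=0$ (and symmetrically $n_-=0$). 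The main obstacle is precisely the finiteness of $S(\omega)=\sprod{|\psi|}{|A|\,|\psi|}$: by the Schur test this holds whenever the row sums are uniformly bounded, but here $R_x(\omega)$ is only almost surely finite and may grow in $x$. This is exactly where the quantitative content of \eqref{Fig1a} and ergodicity must be spent — applying the pointwise ergodic theorem (Theorem \ref{theorem:linde}) to the integrable field $R_x(\omega)=R_0(T_x\omega)$ to control the spatial growth of the row sums and to rule out a conspiracy between rare large rows and the mass of the $\ell^2$-tail of $\psi$. Carrying out this last estimate is the crux; once it is in place, $\|\psi\|=0$ gives $\mathcal{N}_\pm(\omega)=\{0\}$ almost everywhere, i.e. essential self-adjointness.
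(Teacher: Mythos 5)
First, a point of comparison: the paper does \emph{not} prove this statement --- Theorem \ref{Fig1} is imported as a black box from \cite{Figotin-87} and \cite{FigotinP-92}, and is then merely \emph{applied} (the paper's own work in Lemma \ref{la:sa} consists of verifying the hypothesis \eqref{Fig1a} for $\tilde H$). So your attempt has to stand on its own, and its scaffolding is largely sound: the a.s.\ finiteness of all row sums $R_x(\omega)=\sum_y|A_{x,y}(\omega)|$ from \eqref{Fig1a} plus covariance, the identification of $A(\omega)^\ast$ as formal matrix action, the covariance $\mathcal{N}_\pm(T_\gamma\omega)=U_\gamma\mathcal{N}_\pm(\omega)$, and the cut-off Green's identity
\[
\|\one_\Lambda\psi\|^2\;\le\;\sum_{x\notin\Lambda}\sum_{y\in\Lambda}|A_{x,y}(\omega)|\,|\psi(x)|\,|\psi(y)|
\]
are all correct and constitute one standard route to such results. (Two minor remarks: measurability of $\omega\mapsto P_\pm(\omega)$ and of $\dim\mathcal{N}_\pm$ is asserted rather than justified, and the ergodic reduction to $\rho_\pm=0$ is never actually used by your endgame, which argues $\psi=0$ pointwise in $\omega$.)

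The genuine gap is exactly where you place the word ``crux'': you never prove $S(\omega)=\sum_{x,y}|A_{x,y}(\omega)|\,|\psi(x)|\,|\psi(y)|<\infty$, nor the weaker and sufficient statement that the boundary flux vanishes along \emph{some} subsequence of boxes, and this step is not a routine fill-in --- it is the entire content of the theorem. The tool you point to, Theorem \ref{theorem:linde} applied to $R_x(\omega)=R_0(T_x\omega)$, controls only spatial averages, $\sum_{x\in\Lambda_n}R_x\approx|\Lambda_n|\,\EE(R_0)$ and $\sum_{x\in\Lambda_n}R_x^2\approx|\Lambda_n|\,\EE(R_0^2)$; this is fully compatible with sparse rows of size of order $|\Lambda_n|^{1/2}$, and since a deficiency vector $\psi$ is an unknown $\ell^2$-function that may concentrate precisely on those rows, average bounds alone cannot rule out the ``conspiracy'' you yourself name. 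Nor does the natural averaging repair work: telescoping the flux over annuli yields $\sum_n(\text{flux}_n)\le\sum_{x}|\psi(x)|^2\sum_y d(x,y)|A_{x,y}|$, which requires a first-moment-in-distance condition on the rows that is not assumed. Symptomatically, your sketch only ever uses $\EE(R_0)<\infty$, whereas \eqref{Fig1a} is a \emph{second}-moment hypothesis; in Figotin's actual argument the square is spent exactly at the step you defer, via ergodic control of $\sum_{x\in\Lambda_n}R_x^2$ combined with a Carleman/Berezanskii-type growth criterion for essential self-adjointness of hermitian matrix operators. As it stands, then, the proposal is a plausible plan whose decisive estimate is missing --- which is precisely why the paper quotes this theorem from the literature instead of proving it.
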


\begin{proof}[Proof of Lemma \ref{la:sa}]
First we show that $\tilde H$ is metrically transitive. To this end, define $\cT$ and $\cU$ as follows
\[
 \cT:=\{T_\gamma\mid \gamma\in \ZZ^d\},\quad \cU:=\{U_{T_\gamma}:=U_\gamma \mid \gamma\in\ZZ^d\}.
\]
and set $\phi:\cT\to\cU$, $\phi(T_\gamma)=U_\gamma$, which clearly is a homomorphism. It is obvious that $\cT$ and $\cU$ are groups and it is easy to prove that each $U_\gamma$ is unitary. Furthermore we know from Lemma \ref{lemma:Tergodic} that the translations $T_\gamma$ are measure preserving automorphisms of the probability space. Property \eqref{eq:mt_erg} follows from the ergodicity of $T$ shown in Lemma 
\ref{lemma:Tergodic} as well. From line \eqref{eq:transl} we infer that \eqref{eq:mt_transl} holds.

In order to apply Theorem \ref{Fig1}, to show that $\tilde H$ is almost surely essentially selfadjoint, it remains to prove \eqref{Fig1a} for $\tilde H$. Therefore we consider for each $\omega\in\Omega$
\begin{align}
 \bigg(\sum_{x\in\ZZ^d}|\tilde H_{0,x}(\omega)|\bigg)^2 
&= \bigg( |H_{0,0}(\omega)| + \sum_{x\neq 0} |H_{0,x}(\omega)|  \bigg)^2 \nonumber \\
&\leq \bigg(\alpha |a_{\{0\}}(\omega)b_{\{0\}}(\omega)| + (\beta+1)\sum_{x\neq 0}|a_{\{0,x\}}(\omega)b_{\{0,x\}}(\omega)| \bigg)^2     \label{eq:la31a} \\
&\leq 4\bigg(\sum_{x\in\ZZ^d} |a_{\{0,x\}}(\omega)|b_{\{0,x\}}(\omega) \bigg)^2. \nonumber
\end{align}
For each $\omega\in\Omega$ set $N(\omega):=\{x\in\ZZ^d \mid b_{\{0,x\}}(\omega)=1 \}$. By Lemma \ref{lemma:locfin} there exists a set $\tilde \Omega\subseteq  \Omega$ of full measure such that $|N(\omega)|<\infty$ for all $\omega\in\tilde\Omega$. For $\omega \in\tilde \Omega$ we have
\[
  \Big(\sum_{x\in\ZZ^d} |a_{\{0,x\}}(\omega)|b_{\{0,x\}}(\omega)\Big)^2 
= \Big(\sum_{x\in N(\omega)}|a_{\{0,x\}}(\omega)|\Big)^2
\leq |N(\omega)|\cdot \sum_{x\in N(\omega)}|a_{\{0,x\}}(\omega)|^2.
\]
Taking the expectation value on both sides and the application of the monotone convergence theorem leads to
\begin{align*}
 \EE\bigg(\Big(\sum_{x\in\ZZ^d} |a_{\{0,x\}}(\omega)|b_{\{0,x\}}(\omega) \Big)^2\bigg)
\leq
 \EE\bigg(\Big( \sum_{x\in\ZZ^d} |a_{\{0,x\}}|^2 b_{\{0,x\}} |N| \Big)\bigg)
\leq
 v^2 \sum_{x\in\ZZ^d} \EE(b_{\{0,x\}}|N|),
\end{align*}
where $v^2$ is the upper bound for the second moments given in \eqref{eq:def:mu}.
For each $x\in \ZZ^d$ and $\omega\in\Omega$ we set $N_x(\omega):=|N(\omega)\setminus\{x\}|$, then we obtain for fixed $x\in\ZZ^d$
\[
 \EE( b_{\{0,x\}} |N|)
= \sum_{k=1}^\infty k \cdot \PP(b_{\{0,x\}}(\omega)=1, N_x(\omega)=k-1)
= \PP(b_{\{0,x\}}(\omega)=1) \EE(N_x+1).
\]
Using $\EE(N_x)\leq \EE(|N|)=\|p\|_1<\infty$ this implies
\[
 \EE\bigg(\Big(\sum_{x\in\ZZ^d} |a_{\{0,x\}}(\omega)|b_{\{0,x\}}(\omega) \Big)^2\bigg)
\leq v^2 (\EE(|N|)+1) \sum_{x\in\ZZ^d} p(x) = v^2(\|p\|_1^2 +\|p\|_1)
<\infty.
\]
 This shows together with \eqref{eq:la31a} the finiteness of the expression in \eqref{Fig1a} for the operator $\tilde H$. Hence Theorem \ref{Fig1} gives a set $\Omega'$ of full measure such that for each $\omega\in\Omega'$ the operator $\tilde H(\omega)$ is essentially selfadjoint. This proves of part (a).

To complete the prove of part (b) it remains to show that the operator $H$ is metrically transitive. 
This follows by the same argument as we used to prove metrically transitivity of $\tilde H$. Note that here we use that \eqref{eq:transl} hold for $H$ as well.
\end{proof}

The operator $H$ defined as in Lemma \ref{la:sa} is a random operator with domain $C_c(\ZZ^d)$, c.f. \cite{FigotinP-92}. We will refer to this operator as \emph{weighted Hamiltonian on the graph $\Gamma_\omega$}.

Let $(\Lambda_n)$ be a sequence of cubes given by
\begin{equation}\label{def:cube}
\Lambda_n:=([-n,n]\cap\ZZ)^d \quad (n\in\NN) 
\end{equation}
 and  for each $n\in \NN$ let $H_n(\omega)$ be the restriction of $H(\omega)$ to $\Lambda_n$. To be precise, for $\Lambda\subseteq\ZZ^d$ let the inclusion $i_\Lambda:\ell^2(\Lambda)\to\ell^2(\ZZ^d)$ and the projection $p_\Lambda:\ell^2(\ZZ^d)\to\ell^2(\Lambda)$ be given by
\[
 (i_\Lambda \phi)(x)=\begin{cases}\phi(x)& \text{if }x\in \Lambda\\ 0&\text{otherwise}\end{cases}
\quad\text{and}\quad
 (p_\Lambda \psi)(y)=\psi(y)
\]
for all $x\in\ZZ^d, y\in \Lambda, \phi\in\ell^2(\Lambda)$ and $\psi\in\ell^2(\ZZ^d)$. Then we set 
\[
H_n(\omega):= p_{\Lambda_n} H(\omega) i_{\Lambda_n}:\ell^2(\Lambda_n)\to\ell^2(\Lambda_n)
\]
 For each $\omega\in\Omega$ and $n\in\NN$ we define a function $F_n^{\omega}:\RR\to \RR$ by 
\begin{align}\label{def:F_n}
 F_n^{\omega}(\lambda) := \left|\{ \text{ eigenvalues of }H_n(\omega) \text{ not exceeding }\lambda  \}\right|,
\end{align}
where we count the eigenvalues with their multiplicity.
Therefore $F_n^{\omega}$ is the distribution function of a measure which we will denote by $\rho_n^{(\omega)}$.
Note that $|\Lambda_n|^{-1}\rho_n^{(\omega)}$ is a probability measure.

\section{Weak convergence}
\label{s:Weak}
In order to prove weak convergence of the approximating distribution functions we make use of an abstract result by Figotin \cite{Figotin-87}, see also Theorem 4.8 in \cite{FigotinP-92}, which we now cite in a special case.

\begin{Theorem} [\cite{Figotin-87}, \cite{FigotinP-92}]  \label{Fig2}
Let $(\Omega,\cA,\PP)$ be a probability space and $A$ a metrically transitive random operator with domain $C_c(\ZZ^d)$ such that
\[
\sum_{x\in\ZZ^d} \EE(|A_{0,x}|)<\infty
\]
and assume that $A$ is almost surely self-adjoint.
Then there exists a set $\tilde\Omega\subseteq \Omega$ of full measure such that for all $\omega\in\tilde\Omega$ and all $\lambda\in \{s\in\RR\mid F \text{ is continuous in }s\}$ one has
\[
 \lim_{n\to\infty} \frac{F_n^\omega(\lambda)}{|\Lambda_n|}= F(\lambda)
\]
where the limit $F:\RR\to[0,1]$ given by $\lambda\mapsto \EE\{\sprod{E_A( (-\infty,\lambda] )\delta_0}{\delta_0} \}$ is a distribution function of a probability measure.
Note that here $E_{A(\omega)}((-\infty,\lambda])$ is the spectral projection in the interval $(-\infty,\lambda]$ of the operator $A(\omega)$ and $\delta_x\in \ell^2(\ZZ^d)$ denotes the element with $\delta_x(y)=1$ if $x=y$ and $\delta_x(y)=0$ otherwise.
\end{Theorem}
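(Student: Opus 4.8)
The plan is to recast the statement as an almost-sure weak convergence result and then reduce it to a determining family of test functions. Define the measure $\nu$ on $\RR$ by $\nu((-\infty,\lambda])=F(\lambda)=\EE\{\sprod{E_A((-\infty,\lambda])\delta_0}{\delta_0}\}$; since $E_A(\RR)=\Id$ one has $\nu(\RR)=\EE\{\|\delta_0\|^2\}=1$, so $\nu$ is a probability measure with distribution function $F$. Writing $\nu_n^\omega:=|\Lambda_n|^{-1}\rho_n^{(\omega)}$ for the normalized finite-volume eigenvalue distribution of the restriction $A_n(\omega):=p_{\Lambda_n}A(\omega)i_{\Lambda_n}$, the asserted pointwise convergence of $F_n^\omega/|\Lambda_n|$ to $F$ at every continuity point of $F$ is exactly the weak convergence $\nu_n^\omega\to\nu$ (Helly/portmanteau). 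To obtain a single null set valid for all these points at once, I would not argue through the distribution functions directly but characterize weak convergence by a \emph{countable} determining family. Because $A$ is self-adjoint but possibly unbounded (so moments need not exist), the natural choice is the resolvent family $g_z(\lambda)=(\lambda-z)^{-1}$ with $z$ in a countable dense subset of $\CC\setminus\RR$: convergence of the Stieltjes transforms $\int g_z\,d\nu_n^\omega\to\int g_z\,d\nu$ for all such $z$ forces $\nu_n^\omega\to\nu$ weakly.

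Next I would express the finite-volume Stieltjes transform as a normalized resolvent trace, $\int g_z\,d\nu_n^\omega=|\Lambda_n|^{-1}\Tr\bigl((A_n(\omega)-z)^{-1}\bigr)=|\Lambda_n|^{-1}\sum_{\gamma\in\Lambda_n}\sprod{(A_n(\omega)-z)^{-1}\delta_\gamma}{\delta_\gamma}$, and compare it entry by entry with the infinite-volume diagonal $\sprod{(A(\omega)-z)^{-1}\delta_\gamma}{\delta_\gamma}$. The infinite-volume average is handled by ergodicity: the covariance \eqref{eq:mt_transl} gives $(A(T_\gamma\omega)-z)^{-1}=U_\gamma(A(\omega)-z)^{-1}U_\gamma^{-1}$, and since $U_\gamma^{-1}\delta_0=\delta_\gamma$ one obtains $\sprod{(A(\omega)-z)^{-1}\delta_\gamma}{\delta_\gamma}=\sprod{(A(T_\gamma\omega)-z)^{-1}\delta_0}{\delta_0}$. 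Thus the diagonal resolvent entries form a bounded stationary field over $\ZZ^d$, and the $\ZZ^d$-ergodic theorem (Birkhoff's theorem for the measure-preserving $\ZZ^d$-action along the F\o{}lner cubes $\Lambda_n$) yields, almost surely, $|\Lambda_n|^{-1}\sum_{\gamma\in\Lambda_n}\sprod{(A(\omega)-z)^{-1}\delta_\gamma}{\delta_\gamma}\to\EE\{\sprod{(A-z)^{-1}\delta_0}{\delta_0}\}=\int g_z\,d\nu$; the integrand is bounded by $|\Im z|^{-1}$, hence integrable, and the ergodicity hypothesis \eqref{eq:mt_erg} applies. Intersecting the resulting null sets over the countable set of $z$'s produces the desired full-measure set $\tilde\Omega$.

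The decisive step is to show that the boundary discrepancy vanishes after normalization, i.e.\ that $|\Lambda_n|^{-1}\sum_{\gamma\in\Lambda_n}\bigl(\sprod{(A_n-z)^{-1}\delta_\gamma}{\delta_\gamma}-\sprod{(A-z)^{-1}\delta_\gamma}{\delta_\gamma}\bigr)\to0$. Writing $P=i_{\Lambda_n}p_{\Lambda_n}$ for the orthogonal projection onto $\ell^2(\Lambda_n)$ and using the resolvent identity, the difference of the two resolvents is governed by the ``defect'' operator $AP-PA$, whose nonzero entries join $\Lambda_n$ to its complement; as both resolvents are bounded by $|\Im z|^{-1}$, the trace of the difference is controlled by the number of such connecting bonds. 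For a genuine finite hopping range this count is $O(|\partial^R\Lambda_n|)=o(|\Lambda_n|)$ since the cubes are a F\o{}lner sequence, and the claim is immediate. The main obstacle is that our operators are only long-range with $\sum_x\EE(|A_{0,x}|)<\infty$ rather than finite range: I would therefore first truncate the hopping at radius $R$, bound the truncation error in trace-per-volume uniformly in $n$ by the tail $\sum_{|x|>R}\EE(|A_{0,x}|)$ (which is exactly where the $\ell^1$-summability hypothesis enters), apply the boundary estimate to the truncated finite-range operators, and only then let $R\to\infty$. Making this interchange of the limits $n\to\infty$ and $R\to\infty$ rigorous, using the norm-continuity of resolvents in the operator together with the smallness of the $\ell^1$-tail, is the technical heart of the argument.
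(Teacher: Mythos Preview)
The paper does not prove Theorem~\ref{Fig2}: it is quoted without proof as a special case of a result of Figotin (Theorem~4.8 in \cite{FigotinP-92}) and is then applied as a black box in Theorem~\ref{theorem:weak}. There is therefore no in-paper argument to compare your proposal against.

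Your outline is the standard resolvent/Stieltjes-transform route used in \cite{FigotinP-92}, and the architecture is sound: reduce weak convergence to a countable family of $z\in\CC\setminus\RR$, apply the $\ZZ^d$-ergodic theorem to the bounded stationary field $\omega\mapsto\sprod{(A(\omega)-z)^{-1}\delta_0}{\delta_0}$, and show the finite-volume boundary discrepancy is $o(|\Lambda_n|)$. Two places in your sketch would need tightening. First, ``the trace of the difference is controlled by the number of connecting bonds'' is not literally correct for resolvents, since $(A-z)^{-1}$ is nonlocal; what one actually bounds, via the second resolvent identity and $\|(\cdot-z)^{-1}\|\le|\Im z|^{-1}$, is a sum of absolute values of the defect entries $|A_{x,y}|$ linking $\Lambda_n$ to its complement, and it is this sum (not a mere count) that the hypothesis $\sum_x\EE(|A_{0,x}|)<\infty$ controls. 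Second, passing from the $\ell^1$-tail bound in expectation to an almost-sure statement uniform in $n$ requires its own ergodic or Borel--Cantelli step, not just a formal $n\to\infty$ then $R\to\infty$ interchange; you flag this correctly as the technical heart, but the sketch stops short of saying how it is done. With these refinements your plan matches the argument in the cited references.
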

This theorem and the previous considerations immediately give the following theorem.

\begin{Theorem}\label{theorem:weak}
 Let the probability space $(\Omega,\cA,\PP)$ and the operator $H$ be the weighted Hamiltonian given in Section 2. Set
$$F:\RR\to [0,1],\quad F(\lambda):=\EE\{\sprod{E_H( (-\infty,\lambda] )\delta_0}{\delta_0} \}.$$
Then there exists a set $\tilde \Omega\subseteq \Omega$ of full measure, such that for all $\omega \in\tilde \Omega$ the distribution functions $F_n^\omega/|\Lambda_n|$ converge to the distribution function $F$ point-wise at all points of continuity of $F$.
\end{Theorem}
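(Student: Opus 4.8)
The plan is to deduce Theorem \ref{theorem:weak} as an immediate consequence of the abstract result Theorem \ref{Fig2}, applied to the operator $A = H$. To this end I must verify the three hypotheses of that theorem: that $H$ is a metrically transitive random operator with domain $C_c(\ZZ^d)$, that it satisfies the summability bound $\sum_{x\in\ZZ^d}\EE(|H_{0,x}|) < \infty$, and that $H$ is almost surely self-adjoint.

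Two of these three hypotheses are already at my disposal. Metric transitivity of $H$ is precisely part (b) of Lemma \ref{la:sa}, and almost-sure (essential) self-adjointness is part (a). It therefore remains only to check the summability condition. Rather than estimate $\sum_x \EE(|H_{0,x}|)$ directly, I would observe that it is dominated by the stronger second-moment quantity \eqref{Fig1a}, whose finiteness was already established in the course of proving Lemma \ref{la:sa}. Indeed, writing $Y := \sum_{x\in\ZZ^d}|H_{0,x}|$, Tonelli's theorem gives $\sum_{x\in\ZZ^d}\EE(|H_{0,x}|) = \EE(Y)$, and Jensen's inequality (equivalently Cauchy--Schwarz) yields $\EE(Y) \leq (\EE(Y^2))^{1/2}$, with $\EE(Y^2)$ being exactly the expression shown to be finite in \eqref{Fig1a}.

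With all three hypotheses in place, Theorem \ref{Fig2} directly produces a full-measure set $\tilde\Omega$ on which $F_n^\omega/|\Lambda_n|$ converges to $\lambda \mapsto \EE\{\sprod{E_H((-\infty,\lambda])\delta_0}{\delta_0}\}$ at every continuity point of the latter; since this limit is precisely the function $F$ defined in the statement, the proof is complete. The only genuine work---establishing essential self-adjointness and metric transitivity of the concrete weighted Hamiltonian---was carried out in the preceding lemmas, so no real obstacle remains here; the argument is a matter of matching the abstract framework's hypotheses and conclusion to $H$. If one preferred a self-contained verification of summability, one could instead split $\sum_x \EE(|H_{0,x}|)$ into its diagonal and off-diagonal parts, use the independence of the $a_e$ and $b_e$ together with $\EE(|a_e|) \leq v^2 + 1$ and $\EE(b_{\{0,x\}}) = p(x)$, and bound the total by $(v^2+1)(\|p\|_1 + \beta\|p\|_1 + \alpha\, p(0))$, which is finite since $p \in \ell^1(\ZZ^d)$.
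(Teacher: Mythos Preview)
Your proposal is correct and follows essentially the same route as the paper: verify the hypotheses of Theorem \ref{Fig2} using Lemma \ref{la:sa} and then read off the conclusion. The only minor variation is that the paper checks the summability $\sum_x \EE(|H_{0,x}|)<\infty$ by a direct decomposition into diagonal and off-diagonal parts (arriving at the bound $2(v^2+1)\|p\|_1$), whereas your primary argument bounds it via Cauchy--Schwarz from the second-moment quantity \eqref{Fig1a} already established in Lemma \ref{la:sa}; you even sketch the paper's direct computation as your alternative.
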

\begin{proof}
By definition the operator $H(\omega)$ is self-adjoint for all $\omega\in\Omega$. Furthermore $H$ has domain $C_c(\ZZ^d)$ and $H$ is metrically transitive by Lemma \ref{la:sa}. The finiteness of $\sum_{x\in\ZZ^d} \EE(|H_{0,x}|)$ follows from
\begin{align*}
 \sum_{x\in\ZZ^d} \EE(| H_{0,x}|)
&\leq \alpha \EE(|a_{\{0\}}|b_{\{0\}}) + \beta \EE\bigg(\sum_{x\neq 0}|a_{\{0,x\}}|b_{\{0,x\}}\bigg) +\sum_{x\neq 0} \EE(|a_{\{0,x\}}|b_{\{0,x\}}) \\
&\leq 2 \sum_{x\in \ZZ^d}\EE(|a_{\{0,x\}}|b_{\{0,x\}}) \leq 2(v^2+1) \|p\|_1 <\infty,
\end{align*}
where $v^2$ is the upper bound for the second moments given in \eqref{eq:def:mu}. Hence, Theorem \ref{Fig2} implies the claim of the theorem.
\end{proof}

\section{Control of the jumps}
\label{s:Jumps}

The aim of this section is to control the jumps of the limit function given in Corollary \ref{theorem:weak} in order to obtain uniform convergence of the approximants.
In the following we will make use of Birkhoff's ergodic theorem in the $d$-dimensional case, see \cite{Keller-98}.
\begin{Theorem}\label{theorem:linde}
  Let $\ZZ^d$ act from the left on a probability space $(\Omega,\cA,\PP)$ by an ergodic and measure preserving transformation $T$ an let $(\Lambda_n)$ be the sequence of cubes given as in \eqref{def:cube}. Then for any $ f\in L^1(\PP)$
  \[
  \lim_{n\rightarrow\infty}\frac{1}{\vert \Lambda_n \vert}\sum_{g\in \Lambda_n}f(T_g \omega ) =\int f(\omega) d \PP(\omega)
  \]
  holds almost surely.
\end{Theorem}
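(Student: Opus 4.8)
The statement is the $d$-dimensional pointwise ergodic theorem for the averaging operators
\[
 A_n f(\omega) := \frac{1}{|\Lambda_n|}\sum_{g\in\Lambda_n} f(T_g\omega),
\]
which average $f$ over the F\o{}lner cubes $\Lambda_n=([-n,n]\cap\ZZ)^d$. My plan is the classical three-step scheme: first establish a maximal inequality, then verify convergence on a dense subclass of $L^1(\PP)$, and finally glue the two together by the Banach principle. Ergodicity will then pin down the limit to be the constant $\int f(\omega)\,d\PP(\omega)$.

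First I would prove the maximal inequality. Setting $f^*(\omega):=\sup_{n\in\NN} A_n|f|(\omega)$, the claim is the weak-type bound
\[
 \PP\bigl(\{f^*>t\}\bigr)\leq \frac{C_d}{t}\,\|f\|_{L^1(\PP)}\qquad(t>0),
\]
with a constant $C_d$ depending only on the dimension. The natural route is a transference argument combined with a Vitali/Besicovitch covering lemma for the discrete cubes $\{g+\Lambda_n\}$ in $\ZZ^d$: cubes have the bounded-overlap property required by such a covering, which yields the uniform weak-$(1,1)$ estimate just as in Wiener's maximal theorem. I expect this to be the main obstacle. The genuinely $d$-dimensional cube average is \emph{not} controlled in weak-$(1,1)$ fashion by merely iterating the one-dimensional Birkhoff maximal inequality, since the iterated ``strong'' maximal function only obeys an $L\log L$ bound; so even if one prefers to reduce the convergence to the one-dimensional Birkhoff theorem applied along each generator, the joint (cube) limit still forces one through a covering estimate for cubes.

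Next I would exhibit a dense subclass $\cD\subseteq L^1(\PP)$ on which $A_n f$ converges pointwise almost everywhere. Take $\cD$ to be spanned by the constants together with the coboundaries $f=u-u\circ T_{e_i}$, where $u\in L^\infty(\PP)$ and $e_i$ runs through the standard generators of $\ZZ^d$. For a constant one has $A_n c=c$ trivially. For a coboundary the cube sum telescopes in the $i$-th coordinate, leaving only two boundary slices per fixed value of the remaining $d-1$ coordinates, so that
\[
 |A_n f(\omega)|\leq \frac{2\,\|u\|_{L^\infty}}{2n+1}\longrightarrow 0 =\int f(\omega)\,d\PP(\omega).
\]
That $\cD$ is dense follows from the von Neumann mean ergodic theorem: since $T$ is ergodic, the only $L^2$-functions invariant under the whole action are the constants, whence their orthogonal complement is the closed span of the coboundaries; thus constants plus coboundaries are dense in $L^2(\PP)$ and, on a probability space, in $L^1(\PP)$, where bounded $u$ suffice after a routine approximation.

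Finally I would combine these ingredients via the Banach principle. Writing $f=g+h$ with $g\in\cD$ and $\|h\|_{L^1(\PP)}$ small, subadditivity of the oscillation $\Omega f:=\limsup_n A_nf-\liminf_n A_nf$ together with $\Omega g=0$ and $\Omega h\leq 2h^*$ gives, using the weak-$(1,1)$ bound, $\PP(\Omega f>\epsilon)\leq \frac{2C_d}{\epsilon}\|h\|_{L^1(\PP)}$ for every $\epsilon>0$. Letting $\|h\|_{L^1(\PP)}\to0$ forces $\Omega f=0$ almost everywhere, i.e.\ $A_n f$ converges a.e.\ for every $f\in L^1(\PP)$. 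The a.e.\ limit is $T$-invariant, hence constant by ergodicity, and since the limit already equals $\int f\,d\PP$ on the dense class $\cD$ it does so for all $f$, which is exactly the asserted convergence.
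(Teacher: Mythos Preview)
Your sketch is the standard route to the multiparameter pointwise ergodic theorem (Wiener-type maximal inequality via a covering lemma, convergence on the dense class of constants plus coboundaries, then the Banach principle) and is essentially correct. However, the paper does not prove Theorem~\ref{theorem:linde} at all: it is simply quoted as Birkhoff's ergodic theorem in the $d$-dimensional case with a reference to Keller's textbook \cite{Keller-98}, and then invoked as a black box in the proof of Lemma~\ref{lemma:LV2}. So there is no proof in the paper to compare against; you have supplied an argument where the authors cite the literature.

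One small point worth tightening: the final clause, ``since the limit already equals $\int f\,d\PP$ on the dense class $\cD$ it does so for all $f$'', is a bit elliptic. Knowing that the a.e.\ limit $\bar f$ is constant and that $\bar g=\int g\,d\PP$ for $g\in\cD$ does not by itself give $\bar f=\int f\,d\PP$ for general $f$ without an extra continuity argument. A clean fix: for $f\in L^\infty$ use $|A_nf|\le\|f\|_\infty$ and dominated convergence together with $\int A_nf\,d\PP=\int f\,d\PP$ to get $\bar f=\int f\,d\PP$; for $f\in L^1$ approximate by bounded $f_k$ and observe that the constant $\overline{f-f_k}$ satisfies $\PP(|\overline{f-f_k}|>t)\le C_d\|f-f_k\|_{L^1}/t$ by your maximal inequality, forcing $\overline{f-f_k}\to0$. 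This is routine and not a genuine gap.
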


\begin{Lemma}\label{lemma:LV2}
Let $(\Omega,\cA,\PP)$ be the probability space, $H$ the randomly weighted Hamiltonian and $(\Lambda_n)$ be the sequence of cubes given as in \eqref{def:cube}. Then there exists a set $\tilde\Omega\subseteq\Omega$ of full measure such that for all $\omega\in\tilde\Omega$ and all $\lambda\in\RR$ we have
\[
 \lim_{n\to\infty}\frac{\Tr(\chi_{\Lambda_n}E_{H(\omega)}(\{\lambda\}))}{|\Lambda_n|}=\EE\{ \sprod{E_{H(\omega)}(\{\lambda\})\delta_0}{ \delta_0} \}.
\]
\end{Lemma}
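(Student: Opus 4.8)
The plan is to rewrite the normalized trace as a Birkhoff average over the cube $\Lambda_n$ and then invoke the pointwise ergodic theorem (Theorem \ref{theorem:linde}). The only real difficulty is that the ergodic theorem a priori produces one exceptional null set \emph{per} energy $\lambda$, whereas the assertion demands a single full-measure set valid for \emph{all} $\lambda\in\RR$ simultaneously.

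First I would record the covariance of the spectral projections. Since $H(T_\gamma\omega)=U_\gamma H(\omega)U_\gamma^{-1}$ by \eqref{eq:transl} and unitary conjugation commutes with the Borel functional calculus, one has $E_{H(T_\gamma\omega)}(\cdot)=U_\gamma E_{H(\omega)}(\cdot)U_\gamma^{-1}$. Using $U_\gamma^{-1}\delta_0=\delta_\gamma$ together with unitarity, the function $f_\lambda(\omega):=\sprod{E_{H(\omega)}(\{\lambda\})\delta_0}{\delta_0}$ satisfies $f_\lambda(T_\gamma\omega)=\sprod{E_{H(\omega)}(\{\lambda\})\delta_\gamma}{\delta_\gamma}$. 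Because $\chi_{\Lambda_n}$ is the orthogonal projection onto $\ell^2(\Lambda_n)$, expanding the (finite-rank, hence trace-class) operator $\chi_{\Lambda_n}E_{H(\omega)}(\{\lambda\})$ in the basis $(\delta_x)_{x\in\ZZ^d}$ yields
\[
\frac{\Tr(\chi_{\Lambda_n}E_{H(\omega)}(\{\lambda\}))}{|\Lambda_n|}=\frac{1}{|\Lambda_n|}\sum_{x\in\Lambda_n}\sprod{E_{H(\omega)}(\{\lambda\})\delta_x}{\delta_x}=\frac{1}{|\Lambda_n|}\sum_{x\in\Lambda_n}f_\lambda(T_x\omega).
\]
Since $0\le f_\lambda\le 1$ pointwise, each $f_\lambda$ lies in $L^1(\PP)$, and the identical computation applies to $g_\mu(\omega):=\sprod{E_{H(\omega)}((-\infty,\mu])\delta_0}{\delta_0}$, whose expectation is $\EE\{g_\mu\}=F(\mu)$.

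To control all energies at once I would first apply Theorem \ref{theorem:linde} to the countable family $g_\mu$, $\mu\in\mathbb{Q}$, and intersect the resulting null sets, obtaining a full-measure set $\Omega_1$ on which $|\Lambda_n|^{-1}\Tr(\chi_{\Lambda_n}E_{H(\omega)}((-\infty,\mu]))\to F(\mu)$ for every rational $\mu$. For arbitrary $\lambda\in\RR$ and rationals $\mu_1<\lambda\le\mu_2$ one has the monotonicity bound $\Tr(\chi_{\Lambda_n}E_{H(\omega)}(\{\lambda\}))\le\Tr(\chi_{\Lambda_n}E_{H(\omega)}((\mu_1,\mu_2]))$, whose normalization converges on $\Omega_1$ to $F(\mu_2)-F(\mu_1)$. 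Letting $\mu_1\uparrow\lambda$ and $\mu_2\downarrow\lambda$ through $\mathbb{Q}$ and using continuity from above of the averaged spectral measure gives the upper bound $\limsup_n |\Lambda_n|^{-1}\Tr(\chi_{\Lambda_n}E_{H(\omega)}(\{\lambda\}))\le \EE\{f_\lambda\}$ for \emph{every} $\lambda\in\RR$ on the single set $\Omega_1$.

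For the matching lower bound I would exploit that the averaged measure $I\mapsto\EE\{\sprod{E_{H(\omega)}(I)\delta_0}{\delta_0}\}$ is finite, so its set of atoms $D:=\{\lambda:\EE\{f_\lambda\}>0\}$ is countable. Applying Theorem \ref{theorem:linde} to $f_\lambda$ for each $\lambda\in D$ and intersecting with $\Omega_1$ produces a full-measure set $\tilde\Omega$ on which: for $\lambda\in D$ the ergodic theorem yields the exact limit $\EE\{f_\lambda\}$; for $\lambda\notin D$ one has $\EE\{f_\lambda\}=0$, and the upper bound together with nonnegativity of the trace forces convergence to $0=\EE\{f_\lambda\}$. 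In either case the limit equals $\EE\{\sprod{E_{H(\omega)}(\{\lambda\})\delta_0}{\delta_0}\}$, which is the claim. The main obstacle is exactly this passage from pointwise-in-$\lambda$ ergodic convergence to a uniform-in-$\lambda$ null set; I expect it to be resolved, as sketched, by combining the upper-semicontinuity squeeze at rational endpoints with the separate, countable treatment of the atoms.
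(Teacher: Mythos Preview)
Your proposal is correct and follows the same core strategy as the paper: expand the trace in the basis $(\delta_x)_{x\in\ZZ^d}$, use the covariance $E_{H(T_\gamma\omega)}=U_\gamma E_{H(\omega)}U_\gamma^{-1}$ to recognize a Birkhoff average of $f_\lambda(\omega)=\sprod{E_{H(\omega)}(\{\lambda\})\delta_0}{\delta_0}$, and invoke Theorem~\ref{theorem:linde}. The paper's own proof stops there and does not spell out the point you identify as the main difficulty---producing a \emph{single} full-measure set $\tilde\Omega$ valid for all $\lambda\in\RR$ rather than one null set per energy; your argument (a $\limsup$ squeeze via rational endpoints together with a separate application of the ergodic theorem to the countably many atoms of the averaged spectral measure) is correct and supplies exactly the detail the paper leaves implicit.
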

\begin{proof}
 Let $\omega\in\Omega$ be fixed. By definition of the trace we have
\begin{align}\label{la:jumpA1}
 \Tr(\chi_{\Lambda_n}E_{H(\omega)}(\{\lambda\}))
=\sum_{x\in \ZZ^d} \sprod{\chi_{\Lambda_n}E_{H(\omega)}(\{\lambda\})\delta_x }{\delta_x}
=\sum_{x\in \Lambda_n} \sprod{E_{H(\omega)}(\{\lambda\})\delta_x}{\delta_x}.
\end{align}
Let $\Eig (H(\omega),\lambda)$ denote the eigenspace of $H(\omega)$ corresponding to the value $\lambda$, which could possibly be empty if $\lambda$ is not an eigenvalue. Given $\gamma\in \ZZ^d$, we have $\phi\in \Eig (H(\omega),\lambda)$ if and only if $T_\gamma(\phi)\in \Eig(T_\gamma(H(\omega)),\lambda)$. 

Using this we prove
\begin{equation}\label{la:jumpA2}
 \sprod{E_{T_z(H(\omega))}(\{\lambda\}) \delta_0}{ \delta_0} = \sprod{E_{H(\omega)}(\{\lambda\}) \delta_z}{\delta_z}.
\end{equation}
Therefore let $\delta_0'\in \Eig (T_z(H(\omega)),\lambda)$ and $\delta_0''\in \Eig (T_z(H(\omega)),\lambda)^\perp$ such that $\delta_0=\delta_0'+\delta_0''$. Then we obtain
\begin{align*}
 \sprod{E_{T_z(H(\omega))}(\{\lambda\}) \delta_0}{\delta_0} 
=\sprod{E_{T_z(H(\omega))}(\{\lambda\})  \delta_0'}{\delta_0} + \sprod{E_{T_z(H(\omega))}(\{\lambda\}) \delta_0''}{ \delta_0}  
=\sprod{\delta_0'}{\delta_0} 
\end{align*}
and with the above equivalence we get 
\begin{align*}
\sprod{\delta_0'}{\delta_0}
&= \sprod{T_{-z}(\delta_0')}{T_{-z}(\delta_0)}\\
&= \sprod{E_{H(\omega)}(\{\lambda \})T_{-z}(\delta_0')}{ T_{-z}(\delta_0)}+\sprod{E_{H(\omega)}(\{\lambda \})T_{-z}(\delta_0'')}{ T_{-z}(\delta_0)}\\
&= \sprod{E_{H(\omega)}(\{\lambda \})\delta_z}{ \delta_z},
\end{align*}
which implies \eqref{la:jumpA2}.
Applying \eqref{la:jumpA1}, \eqref{la:jumpA2} and the fact $T_x(H(\omega))=H(T_x(\omega))$ leads to 
\begin{align*}
\frac{\Tr(\chi_{\Lambda_n}E_{H(\omega)}(\{\lambda\})) }{|\Lambda_n|}
=\frac{1}{|\Lambda_n|} \sum_{x\in \Lambda_n} \sprod{E_{H(T_x(\omega))}(\{\lambda\}) \delta_0}{ \delta_0} .
\end{align*}
Finally we use Lemma \ref{lemma:Tergodic} and Theorem \ref{theorem:linde} to obtain the existence of a set $\tilde\Omega\subseteq \Omega$ of measure one such that for each $\omega\in\tilde\Omega$ we have
\begin{align*}
 \lim_{n\to\infty}\frac{\Tr(\chi_{\Lambda_n}E_{H(\omega)}(\{\lambda\})) }{|\Lambda_n|}
&= \int_\Omega \sprod{E_{H(\omega)}(\{\lambda\}) \delta_0}{ \delta_0} d\PP(\omega)
\end{align*}
which was to prove.
\end{proof}
The following fact is taken from \cite{LenzV-09}
\begin{Lemma}\label{lemma:LV1}
 Let $r>0$, $\Lambda\subseteq \ZZ^d$ and $U\subseteq \ell^2(\Lambda)$ be given and denote by $U_r$ the subspace of $U$ consisting of all functions which vanish on $\partial^r (\Lambda)$. Then
\[
 0\leq \dim(U)-\dim (U_r) \leq |\partial^r(\Lambda)|.
\]
\end{Lemma}
\begin{proof}
 Let $P:U\to\ell^2(\partial^r(\Lambda))$ be the natural projection with $(P\phi)(x)=\phi(x)$ for all $x\in \partial^r(\Lambda)$. Then we have
\[
 0\leq \dim(U)-\dim(\ker P) =\dim (\ran P) \leq |\partial^r (\Lambda)|,
\]
which proves the claim as $\ker P= U_r$.
\end{proof}
For given $\omega\in\Omega$, $R\in\NN$ and $Q\subseteq \ZZ^d$ finite, let $L^{(\omega)}(R,Q)$ 
denote the number of $e \in E$ with $b_e(\omega)=1$ which are of length not less than $R$ and incident to some vertex in $Q$, i.e.
\begin{equation}\label{def:Lomega}
 L^{(\omega)}(R,Q):=\left|\left\{\{x,y\} \in E \mid b_{\{x,y\}}(\omega)=1, d(x,y)\geq R \text{ and } \{x,y\}\cap Q\neq \emptyset \right\}\right|.
\end{equation}

 Let $(\Lambda_n)$ be the sequence of cubes given as in \eqref{def:cube}. We chose a function $R:\NN\to\NN$ such that 
\begin{equation}\label{def:R}
 \lim_{n\to\infty} R(n)=\infty \quad\text{and}\quad \lim_{n\to\infty}\frac{|\partial^{R(n)}\Lambda_n|}{|\Lambda_n|}=0
\end{equation}
and set
\begin{equation}\label{def:Lomega_spez}
 L^{(\omega)}_n:=L^{(\omega)}(R(n),\Lambda_n).
\end{equation}
Beside this we set for $R\geq 0$
\[
 \epsilon_R:=\sum_{{x\in \ZZ^d, d(0,x)\geq R}} p(x) 
\]
and for $n\in\NN_0$
\begin{equation}\label{def:ep}
 \epsilon(n):= \epsilon_{R(n)}\quad\text{as well as}\quad\delta(n):= (2n+1)^{-d/4}.
\end{equation}
Note as $p\in\ell^1(\ZZ^d)$ we have by the definition of $R(n)$ that $$\lim_{n\to\infty}\epsilon(n)=\lim_{n\to\infty}\delta(n)=0.$$

The next result estimates the probability that the number of long edges is large.
\begin{Lemma}\label{lemma:schw}
Let $(\Omega,\cA,\PP)$ be given as above. Then the following holds:
\begin{itemize}
 \item [(a)]
There exist constants $R_0\in\NN$ and $\bar\delta>0$ such that for all $0<\delta<\bar\delta$, all $R\geq R_0$ and all finite $Q\subseteq \ZZ^d$
\[
 \PP\left(L^{(\omega)}(R,Q)\geq |Q|(\epsilon_R+\delta)\right) \leq \exp\left( -\frac{\delta^2 |Q|}{4} \right).
\]
\item [(b)]
Let $R:\NN\to\NN$ be as in \eqref{def:R} and $L_n^{(\omega)}=L^{(\omega)}(R(n),\Lambda_{n})$.
Then there exists a set $\tilde \Omega\subseteq\Omega$ of full measure such that for each $\omega\in\tilde\Omega$ there exists $n_0(\omega)$ with
\[
 L_n^{(\omega)}\leq |\Lambda_n| (\epsilon(n)+\delta(n))\quad\quad(n\geq n_0(\omega)).
\]
 \end{itemize}
\end{Lemma}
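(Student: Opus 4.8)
My plan is to establish (a) as a Chernoff-type deviation bound for a sum of independent Bernoulli variables, and then to obtain (b) from (a) by a Borel-Cantelli argument. For (a), the first step is to observe that $L^{(\omega)}(R,Q)$ is literally such a sum: setting $S:=\{\{x,y\}\in E : d(x,y)\geq R,\ \{x,y\}\cap Q\neq\emptyset\}$ we have $L^{(\omega)}(R,Q)=\sum_{e\in S}b_e$, and the $b_e$ are independent Bernoulli with $\PP(b_{\{x,y\}}=1)=p(x-y)$. I would then bound the mean, $\mu:=\EE L^{(\omega)}(R,Q)=\sum_{\{x,y\}\in S}p(x-y)\leq|Q|\,\epsilon_R$: for each fixed $q$, summing $p(q-x)$ over all $x$ with $d(q,x)\geq R$ gives exactly $\epsilon_R$ by translation invariance and evenness of $p$, and summing over $q\in Q$ counts every edge of $S$ at least once, which yields the inequality.

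The next step is the exponential Markov inequality. For $t>0$, independence and $1+u\leq e^u$ give $\EE e^{tL}=\prod_{e\in S}(1+p_e(e^t-1))\leq\exp(\mu(e^t-1))$, the product converging because $\mu<\infty$; hence, using $\mu\leq|Q|\epsilon_R$ and $e^t-1>0$,
\[
\PP\bigl(L^{(\omega)}(R,Q)\geq|Q|(\epsilon_R+\delta)\bigr)\leq\exp\Bigl(|Q|\bigl[-t(\epsilon_R+\delta)+\epsilon_R(e^t-1)\bigr]\Bigr).
\]
The decisive choice is $t=\delta$ together with the elementary estimate $e^\delta-1\leq\delta+\delta^2$ (valid for $0<\delta\leq\bar\delta$ with any $\bar\delta\leq1$), which reduces the bracket to $-\delta^2(1-\epsilon_R)$. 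Fixing $R_0$ so large that $\epsilon_{R_0}\leq 3/4$ --- possible since $\epsilon_R\downarrow0$ as $R\to\infty$ because $p\in\ell^1(\ZZ^d)$ --- gives $1-\epsilon_R\geq1/4$ for all $R\geq R_0$, and hence the claimed bound $\exp(-\delta^2|Q|/4)$.

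For (b), I would apply (a) with $Q=\Lambda_n$, $R=R(n)$ and $\delta=\delta(n)=(2n+1)^{-d/4}$. Since $R(n)\to\infty$ and $\delta(n)\to0$, both $R(n)\geq R_0$ and $\delta(n)<\bar\delta$ hold for all large $n$, so (a) yields
\[
\PP\bigl(L_n^{(\omega)}\geq|\Lambda_n|(\epsilon(n)+\delta(n))\bigr)\leq\exp\bigl(-\tfrac14\delta(n)^2|\Lambda_n|\bigr)=\exp\bigl(-\tfrac14(2n+1)^{d/2}\bigr),
\]
where I used $|\Lambda_n|=(2n+1)^d$. Since $\sum_n\exp(-\tfrac14(2n+1)^{d/2})<\infty$, the Borel-Cantelli lemma shows that almost surely this event occurs for only finitely many $n$, which furnishes the full-measure set $\tilde\Omega$ and, for each $\omega\in\tilde\Omega$, a threshold $n_0(\omega)$ beyond which the stated inequality holds.

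The one step needing genuine care is the calibration of the Chernoff exponent to the precise constant $1/4$ with thresholds $R_0,\bar\delta$ that are uniform in $Q$; this rests on the two facts that the over-counting of edges internal to $Q$ pushes the mean bound in the favorable direction, and that $\epsilon_R$ can be made small by taking $R$ large. Everything else --- identifying $L^{(\omega)}(R,Q)$ as an independent Bernoulli sum and the passage through Borel-Cantelli --- is routine, so I expect no serious obstacle beyond this bookkeeping.
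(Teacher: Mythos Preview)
Your proposal is correct and follows essentially the same strategy as the paper. For part~(a) the paper does not spell out a proof but refers to \cite{Schwarzenberger-12}, noting it is ``basically an application of a Bernstein inequality''; your Chernoff/exponential-Markov argument is a variant of exactly this, and your calibration of the constants is sound. For part~(b) your argument via Borel--Cantelli, with the same summability estimate $\sum_n\exp(-\tfrac14(2n+1)^{d/2})<\infty$, coincides with the paper's proof.
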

\begin{proof}
 The proof of part (a) is to be found in \cite{Schwarzenberger-12}. It is basically an application of a Bernstein inequality. Let us prove part (b). Therefore consider the events
\[
 A_n:=\left\{\omega\in\Omega\mid L_n^{(\omega)}\geq |\Lambda_n|(\epsilon(n)+\delta(n))\right\}.
\]
Then part (a) shows that for $n$ large enough we have 
\[
\PP(A_n)\leq \exp\left(-\delta(n)^2|\Lambda_n|/4\right) =\exp\bigl(-(2n+1)^{d/2}/4\bigr), 
\]
which clearly gives $\sum_{n\in\NN}\PP(A_n)<\infty$. By the Lemma of Borel Cantelli we have 
\[
 \PP\Bigl(\limsup_{n\to\infty} A_n\Bigr)=0
\]
which implies the claim of part (b).
\end{proof}

We use Lemmas \ref{lemma:LV2}, \ref{lemma:LV1} and \ref{lemma:schw} to obtain a result similar to Lemma 6.2 in \cite{LenzV-09}
\begin{Theorem}\label{theorem:technical}
Let $(\Omega,\cA,\PP)$ be the probability space, $H$ the randomly weighted Hamiltonian, $(\Lambda_n)$ be the sequence of cubes and $\rho_n^{(\omega)}$ the measures associated to the eigenvalue counting functions given as before. Then there exists a set $\tilde\Omega\subseteq\Omega$ of full measure such that for all $\omega\in\tilde\Omega$ and all $\lambda\in\RR$ we have
\[
\lim_{n\to\infty}\frac{\rho_{n}^{(\omega)}(\{\lambda\}) }{|\Lambda_n|}=\EE\{ \sprod{E_H(\{\lambda\})\delta_0}{\delta_0} \}.
\]
\end{Theorem}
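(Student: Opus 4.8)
The plan is to reduce everything to Lemma \ref{lemma:LV2}. Since $\rho_n^{(\omega)}(\{\lambda\})=\dim\Eig(H_n(\omega),\lambda)$, and Lemma \ref{lemma:LV2} already yields $\Tr(\chi_{\Lambda_n}E_{H(\omega)}(\{\lambda\}))/|\Lambda_n|\to\EE\{\sprod{E_{H(\omega)}(\{\lambda\})\delta_0}{\delta_0}\}$ on a set of full measure and for every $\lambda$, it suffices to establish the deterministic two-sided estimate
\[
\bigl|\rho_n^{(\omega)}(\{\lambda\})-\Tr(\chi_{\Lambda_n}E_{H(\omega)}(\{\lambda\}))\bigr|\le 2b_n,\qquad b_n:=|\partial^{R(n)}\Lambda_n|+2L_n^{(\omega)},
\]
valid for all $\lambda\in\RR$ simultaneously, with $R(n)$ chosen as in \eqref{def:R}. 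Dividing by $|\Lambda_n|$ and using \eqref{def:R} together with Lemma \ref{lemma:schw}(b) (which gives $L_n^{(\omega)}/|\Lambda_n|\to0$ on a full-measure set independent of $\lambda$) then makes the error vanish, and intersecting with the full-measure set of Lemma \ref{lemma:LV2} finishes the proof. The whole point is that $b_n$ does not depend on $\lambda$, so a single exceptional null set serves all energies.

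Abbreviating $P:=E_{H(\omega)}(\{\lambda\})$ and $\chi:=\chi_{\Lambda_n}$, I would compare three subspaces of $\ell^2(\Lambda_n)$: the finite-volume eigenspace $U:=\Eig(H_n(\omega),\lambda)$ with $\dim U=\rho_n^{(\omega)}(\{\lambda\})$; the space $V_n:=\ran P\cap\ell^2(\Lambda_n)$ of genuine eigenfunctions of $H(\omega)$ supported inside $\Lambda_n$; and $\tilde V_n:=\chi\,\ran P$, the restrictions to $\Lambda_n$ of infinite-volume eigenfunctions. Since $\chi P\chi$ is a positive contraction with $\Tr(\chi P)=\Tr(\chi P\chi)$, its eigenvalue-one eigenspace is precisely $V_n$ and its range is $\tilde V_n$, whence $\dim V_n\le\Tr(\chi_{\Lambda_n}P)\le\dim\tilde V_n$. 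The strategy is to squeeze both $\rho_n^{(\omega)}(\{\lambda\})$ and $\Tr(\chi_{\Lambda_n}P)$ between $\dim V_n$ and $\dim V_n+O(b_n)$. For the upper bound on $\rho_n$, pass from $U$ to its subspace $U'$ of functions vanishing on $\partial^{R(n)}\Lambda_n$ and at every endpoint in $\Lambda_n$ of an $\omega$-edge of length $\ge R(n)$; by Lemma \ref{lemma:LV1} this costs at most $b_n$ dimensions. For $\phi\in U'$ extended by zero, the support avoids the boundary layer and all long-edge endpoints, so only short edges leave $\supp\phi$ and they remain in $\Lambda_n$; as $H(\omega)$ and $H_n(\omega)$ agree on $\Lambda_n\times\Lambda_n$, each such $\phi$ is an exact eigenfunction, i.e.\ $U'\subseteq V_n$, giving $\rho_n^{(\omega)}(\{\lambda\})\le\dim V_n+b_n$.

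The hard part will be the matching lower bound $\dim\tilde V_n\le\dim V_n+2b_n$, because an element of $\tilde V_n$ is the \emph{restriction} of an infinite-volume eigenfunction and need not itself be an eigenfunction; this is exactly where the non-locality of the model, absent in the finite-hopping-range setting of \cite{LenzV-09}, must be absorbed. I would first apply Lemma \ref{lemma:LV1} to $\tilde V_n$ to pass to the subspace $\mathcal{W}$ of elements vanishing on $D_n:=\partial^{R(n)}\Lambda_n\cup\{\text{endpoints in }\Lambda_n\text{ of }\omega\text{-edges of length}\ge R(n)\}$, at the cost of at most $b_n$ dimensions. For $\phi\in\mathcal{W}$ the same support computation as above shows that $(H(\omega)-\lambda)\phi$ can be nonzero only on $D_n$, so the linear map $\phi\mapsto\bigl((H(\omega)-\lambda)\phi\bigr)|_{D_n}$ on $\mathcal{W}$ has rank at most $|D_n|\le b_n$ and kernel contained in $V_n$; hence $\dim\mathcal{W}\le\dim V_n+b_n$ and $\dim\tilde V_n\le\dim V_n+2b_n$. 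This replaces the ``exact eigenfunction'' argument of the upper bound by a rank-of-defect argument, and it is the step that genuinely uses Lemma \ref{lemma:schw} to keep $|D_n|$ of order $b_n=o(|\Lambda_n|)$ despite the long edges.

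Combining the two sides gives $\dim V_n\le\rho_n^{(\omega)}(\{\lambda\})\le\dim V_n+b_n$ and $\dim V_n\le\Tr(\chi_{\Lambda_n}P)\le\dim V_n+2b_n$, hence the claimed estimate $\bigl|\rho_n^{(\omega)}(\{\lambda\})-\Tr(\chi_{\Lambda_n}P)\bigr|\le2b_n$; dividing by $|\Lambda_n|$ and invoking \eqref{def:R}, Lemma \ref{lemma:schw}(b) and Lemma \ref{lemma:LV2} completes the argument.
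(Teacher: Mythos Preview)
Your proof is correct and follows essentially the same strategy as the paper: both sandwich $\rho_n^{(\omega)}(\{\lambda\})$ and $\Tr(\chi_{\Lambda_n}E_{H(\omega)}(\{\lambda\}))$ around the dimension of a pivot space of compactly supported $H(\omega)$-eigenfunctions, with errors controlled by the boundary-layer size plus the number $L_n^{(\omega)}$ of long edges, and then conclude via Lemmas~\ref{lemma:LV2} and~\ref{lemma:schw}(b). The only notable organizational difference is that where the paper bounds $\dim(\bar U_{n,R}^{(\omega)})$ against $D_n^{(\omega)}$ by introducing the auxiliary space $\bar{\bar U}_{n,R}^{(\omega)}$ and counting how many linear conditions separate it from $V_n^{(\omega)}$, you obtain the analogous inequality via the rank--nullity argument for the defect map $\phi\mapsto\bigl((H(\omega)-\lambda)\phi\bigr)\big|_{D_n}$ on $\mathcal{W}$; this is a clean repackaging of the same estimate (note that your claim ``$(H(\omega)-\lambda)\phi$ is supported in $D_n$'' for $\phi\in\mathcal{W}$ does use that $\phi=\chi_{\Lambda_n}\psi$ for an actual eigenfunction $\psi$, not merely the support of $\phi$, so ``the same support computation'' is a slight understatement).
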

\begin{proof}
 Let $\tilde\Omega\subseteq\Omega$ be a set of full measure such that the results of Lemma \ref{lemma:LV2} and of Lemma \ref{lemma:schw} (b) hold for all $\omega\in\tilde\Omega$. We fix some $\omega\in\tilde\Omega$ and $\lambda\in \RR$. With the function $R:\NN\to\NN$ given in \eqref{def:R} we set
\[
 V_n^{(\omega)}:=\left\{ v\in\ell^2(\ZZ^d)\mid (H(\omega)-\lambda)v=0 \text{ and }\supp v \subseteq \Lambda_{n-R(n)}\right\},\quad D_n^{(\omega)}:= \dim V_n^{(\omega)}.
\]
Note that $V_n^{(\omega)}$ consists of the elements $i_{\Lambda_{n}}v$, where $v\in \ell^2(\Lambda_{n})$ satisfying $v\equiv 0$ on $\Lambda_n\setminus \Lambda_{n-R(n)}$,
\begin{equation}\label{eq:la62a}
(p_{\Lambda_n}H(\omega)i_{\Lambda_n}-\lambda) v=0
\quad\text{ and }\quad
  \sum_{y\in\Lambda_{n-R(n)}}(H_{x,y}(\omega)-\lambda\delta_{x}(y))v(y)=0 
\end{equation}
 for all $x\in \Lambda_n^{\rm c}$ with $x\stackrel{\omega}{\sim} \Lambda_{n-R(n)}$.

We consider the following difference
\begin{align}\label{eq:la62e}
 |\rho_n^{(\omega)}(\{\lambda\})- \Tr(\chi_{\Lambda_n}E_{H(\omega)})|
\leq |\rho_n^{(\omega)}(\{\lambda\})-D_n^{(\omega)}| + |D_n^{(\omega)}-\Tr(\chi_{\Lambda_n}E_{H(\omega)})|
\end{align}
and treat the two summands on the right hand side separately. Let us estimate the first one. Consider therefore the sets
\[
 U_n^{(\omega)}:=\left\{ u\in \ell^2(\Lambda_n)\mid (p_{\Lambda_n}H(\omega) i_{\Lambda_n}- \lambda)u=0 \right\}
\]
and
\[
 U_{n,R}^{(\omega)}=\left\{ u\in U_n \mid u\equiv 0 \text{ on } \Lambda_n\setminus\Lambda_{n-R(n)} \right\}.
\]
Then clearly, $\rho_n^{(\omega)}(\{\lambda\})= \dim(U_n^{(\omega)})\geq \dim (V_n^{(\omega)})$ and 
\begin{equation}
\label{eq:Lnomega}
 \dim(U_{n,R}^{(\omega)})-\dim(V_n^{(\omega)})\leq  |\{ y\in \Lambda_n^{\rm c}\mid y \stackrel{\omega}{\sim} \Lambda_{n-R(n)} \}|
\leq L^{(\omega)}(R(n),\Lambda_{n})= L_n^{(\omega)},
\end{equation}
where we used the definition \eqref{def:Lomega}.
The application of Lemma \ref{lemma:LV1} gives
\begin{align}
 0\leq \rho_n^{(\omega)}(\{\lambda \})-D_n^{(\omega)}
 =   \dim(U_n^{(\omega)})-\dim(V_n^{(\omega)}) 
&\leq \dim (U_n^{(\omega)})-\dim(U_{n,R}^{(\omega)})+L_n^{(\omega)} \nonumber \\
&\leq |\partial^{R(n)}\Lambda_n|+L_n^{(\omega)}. \label{eq:la62f}
\end{align}
Now we estimate the second summand in \eqref{eq:la62e}. Therefore let $v_1,\dots,v_{D_n^{(\omega)}}$ be an orthonormal basis (ONB) of $V_n^{(\omega)}$ and let $\tilde v_i$, $i\in I$ be an ONB of the orthogonal complement of $V_n^{(\omega)}$ in the space $\Eig(H(\omega),\lambda)$. 
Furthermore let $\bar v_j$, $j\in J$ be an ONB of $\Eig(H(\omega),\lambda)^\perp$. Then we have
\begin{align*}
 \Tr(\chi_{\Lambda_n} E_\omega(\{\lambda\})) 
&=\sum_{i=1}^{D_n^{(\omega)}} \sprod{\chi_{\Lambda_n} E_\omega(\{\lambda\})v_i}{v_i}+\sum_{i\in I} \sprod{\chi_{\Lambda_n} E_\omega(\{\lambda\})\tilde v_i}{\tilde v_i}+\sum_{i\in J} \sprod{\chi_{\Lambda_n} E_\omega(\{\lambda\})\bar v_i}{\bar v_i}\\
&=\sum_{i=1}^{D_n^{(\omega)}} \sprod{v_i}{v_i}+\sum_{i\in I} \sprod{\chi_{\Lambda_n} \tilde v_i}{\chi_{\Lambda_n} \tilde v_i}
\end{align*}
which gives $D_n^{(\omega)}\leq \Tr(\chi_{\Lambda_n} E_\omega(\{\lambda\})) $. Now let $u_i$, $i\in I$ be an ONB of $$\bar U_n^{(\omega)}:=\ran (\chi_{\Lambda_n}E_\omega(\{\lambda\}))$$ and $\tilde u_j$, $j\in J$ be an ONB of $(\bar U_n^{(\omega)})^\perp$. Then, using Cauchy Schwarz inequality, we obtain 
\[
 \sprod{\chi_{\Lambda_n} E_\omega(\{\lambda\})u_i}{u_i}\leq \|\chi_{\Lambda_n} E_\omega(\{\lambda\})u_i \|  \|u_i\|\leq 1 \quad\text{and}\quad \sprod{\chi_{\Lambda_n} E_\omega(\{\lambda\})\tilde u_j}{\tilde u_j}=0
\]
for all $i\in I$ and all $j\in J$. This gives
\begin{align}\label{eq:la62b}
 D_n^{(\omega)}
\leq \Tr(\chi_{\Lambda_n} E_\omega(\{\lambda\}))
=\sum_{i\in I}\sprod{\chi_{\Lambda_n} E_\omega(\{\lambda\})u_i}{u_i}+\sum_{j\in J}\sprod{\chi_{\Lambda_n} E_\omega(\{\lambda\})\tilde u_j}{\tilde u_j}
\leq \dim(\bar U_n^{(\omega)}).
\end{align}
where we used $\dim(\bar U_n)=|I|$. As before we denote by $\bar U_{n,R}^{(\omega)}$ the subset of $\bar U_n^{(\omega)}$ consisting of the elements which vanish outside of $\Lambda_{n-R}$. Therefore we have
\begin{align}\label{la:jumpA3}
 \bar U_{n,R}^{(\omega)}=\left\{ \chi_{\Lambda_n}v\mid v\in \ell^2(\ZZ^d),  (H(\omega)-\lambda)v=0, v\equiv 0 \text{ on } \partial^{R(n)}\Lambda_n \right\}.
\end{align}
In the next step we define a set $\bar{\bar U}_{n,R}^{(\omega)}\supseteq \bar U_{n,R}^{(\omega)}$ by dropping conditions in \eqref{la:jumpA3}, in the following way
\begin{align*}
 \bar{\bar U}_{n,R}^{(\omega)}
&:=\!\!\left\{\!\! \chi_{\Lambda_n}v \Bigg| v\in \ell^2(\ZZ^d), \!\sum_{y\in \ZZ^d}(H_{x,y}(\omega)-\lambda \delta_x(y))v(y)=0 \text{ for all } x \in Z_n^{(\omega)}, v\equiv 0 \text{ on } \partial^{R(n)}\Lambda_n\!\!\right\}\\
&=\!\!\left\{\!\! \chi_{\Lambda_n}v \Bigg| v\in \ell^2(\ZZ^d), \!\sum_{y\in \Lambda_{n}}(H_{x,y}(\omega)-\lambda \delta_x(y))v(y)=0 \text{ for all } x \in Z_n^{(\omega)}, v\equiv 0 \text{ on } \partial^{R(n)}\Lambda_n\!\!\right\}\!\!,
\end{align*}
where 
\[
Z_n^{(\omega)}=\Lambda_{n-R(n)}\setminus \{x\in \Lambda_{n-R(n)} \mid x\stackrel{\omega}{\sim}\Lambda_n^{\rm c}\}. 
\]
Here we used that for all $x\in Z_n^{(\omega)}$ and $y\in \Lambda_n^{\rm c}$ we have $H_{x,y}(\omega)=0$.

Comparing this representation of $\bar{\bar U}_{n,R}^{(\omega)}$ with the representation $V_n^{(\omega)}$ in \eqref{eq:la62a}, we realize that they differ in at most $2 L_n^{(\omega)} + |\partial^{R(n)}\Lambda_n|$ conditions. As each of these conditions may change the dimension at most by one, we get
\begin{equation}\label{eq:la62c}
\dim (\bar U_{n,R}^{(\omega)})
\leq\dim (\bar{\bar U}_{n,R}^{(\omega)}) 
\leq  D_n^{(\omega)} + 2 L_n^{(\omega)} + |\partial^{R(n)}\Lambda_n|.
\end{equation}
Applying \eqref{eq:la62b}, Lemma \ref{lemma:LV1} and \eqref{eq:la62c} gives
\begin{align}\label{eq:la62d}
0\leq  \Tr(\chi_{\Lambda_n} E_\omega(\{\lambda\}))  - D_n^{(\omega)}
 \leq  \dim(\bar U_n^{(\omega)})- D_n^{(\omega)}
 &\leq  \dim(\bar U_{n,R}^{(\omega)})- D_n^{(\omega)} + |\partial^{R(n)}\Lambda_n| \nonumber \\
 &\leq  2|\partial^{R(n)}\Lambda_n|+ 2 L_n^{(\omega)}
\end{align}
In the last step we apply Lemma \ref{lemma:LV2}, then we combine the estimates for the two summands in \eqref{eq:la62e} given in \eqref{eq:la62f} and \eqref{eq:la62d} and finally use part (b) of Lemma \ref{lemma:schw} to obtain
\begin{align*}
 \lim_{n\to\infty}\frac{\rho_n^{(\omega)}(\{\lambda\})}{|\Lambda_n|}-\EE(\sprod{E_{H(\omega)}(\{\lambda\})\delta_0}{\delta_0})
&=\lim_{n\to\infty} \frac{|\rho_n^{(\omega)}(\{\lambda\})- \Tr(\chi_{\Lambda_n}E_{H(\omega)})|}{|\Lambda_n|} \\
&\leq \lim_{n\to\infty} \frac{3|\partial^{R(n)}\Lambda_n|+3L_n^{(\omega)}}{|\Lambda_n|}\\
&\leq 3\lim_{n\to\infty} \left(\frac{|\partial^{R(n)}\Lambda_n|}{|\Lambda_n|}+\epsilon(n)+\delta(n)\right) =0.
\end{align*}
Here we used the definitions of $R(n)$, $\epsilon(n)$ and $\delta(n)$ in \eqref{def:R} and \eqref{def:ep}.
\end{proof}
\begin{Remark}\begin{itemize}
\item [(a)] Let us stress the fact that proof of Theorem \ref{theorem:technical} does not contain any probabilistic argument. We show the claimed convergence for any fixed choice of $\lambda\in\RR$ and $\omega\in\tilde\Omega$, 
where $\tilde \Omega$ is a  set given rather explicitly by Lemmas \ref{lemma:LV2} and \ref{lemma:schw}.
\item [(b)] Furthermore the proof gives an explicit error-term on finite scales. 
To be precise we have for any $n\in\NN$, $\lambda\in\RR$ and $\omega\in\tilde\Omega$
\[
 |\rho_n^{(\omega)}(\{\lambda\})- \Tr(\chi_{\Lambda_n}E_{H(\omega)})|\leq 3|\partial^{R(n)}\Lambda_n|+3L_n^{(\omega)}
\]
where $L_n^{(\omega)}= L^{(\omega)}(R(n),\Lambda_{n})$ as in \eqref{eq:Lnomega}.
\end{itemize}
\end{Remark}

The following result is essentially standard and has been used in the present context already in \cite{LenzV-09}. It shows that weak convergence of measures plus convergence of the measures at each point implies uniform convergence.

\begin{Lemma}\label{lemma:LV4}
 Let $\rho$ be a probability measure on $\RR$ and let $(\rho_n)$ be as sequence of bounded measures on $\RR$ which weakly converge to $\rho$ and fulfill
\[
  \lim_{n\to\infty}\rho_n(\{\lambda\}) = \rho(\{\lambda\})
\]
for all $\lambda \in \RR$. Then the distribution functions $F_n:\RR\to\RR$, $F_n(\lambda):=\rho_n((-\infty,\lambda])$ converge with respect to supremum norm to the distribution function $F:\RR\to\RR$, $F(\lambda):=\rho((-\infty,\lambda])$.
\end{Lemma}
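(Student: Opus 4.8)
The plan is to combine a tail (tightness) argument with a monotonicity sandwich on a compact interval, using weak convergence away from the atoms of $\rho$ and the atom-convergence hypothesis precisely at the jumps. Throughout I would use that $F$ and every $F_n$ are non-decreasing and right-continuous, that the continuity points of $F$ are exactly the $\lambda$ with $\rho(\{\lambda\})=0$ and form a dense co-countable set, and that weak convergence gives $F_n(\lambda)\to F(\lambda)$ at every continuity point $\lambda$ of $F$ together with $\rho_n(\RR)\to\rho(\RR)=1$.

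Fix $\epsilon>0$. First I would dispose of the tails. Since $F(-\infty)=0$ and $F(+\infty)=1$, pick continuity points $A<B$ of $F$ with $F(A)<\epsilon$ and $F(B)>1-\epsilon$. For $\lambda\le A$ monotonicity gives $0\le F_n(\lambda)\le F_n(A)$, and $F_n(A)\to F(A)<\epsilon$, so $|F_n(\lambda)-F(\lambda)|<\epsilon$ for all large $n$; for $\lambda\ge B$ one argues symmetrically, using in addition $\rho_n(\RR)\to 1$ to bound $F_n(\lambda)\le\rho_n(\RR)$ from above. This reduces the problem to the compact interval $[A,B]$.

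On $[A,B]$ I would set up a finite grid adapted to the atoms. Because $\rho$ is finite it has only finitely many atoms $a_1<\dots<a_k$ in $[A,B]$ of mass $\ge\epsilon$. Around each $a_i$ choose continuity points $c_i<a_i<d_i$ so close to $a_i$ that $\rho((c_i,a_i))<\epsilon$ and $\rho((a_i,d_i))<\epsilon$, and partition the part of $[A,B]$ lying outside $\bigcup_i(c_i,d_i)$ into finitely many subintervals with continuity-point endpoints, each of $\rho$-mass $<\epsilon$ (possible since that region carries no atom of mass $\ge\epsilon$). On such an atom-free subinterval $[x_{j-1},x_j]$ the bracketing $F_n(x_{j-1})\le F_n(\lambda)\le F_n(x_j)$ and $F(x_{j-1})\le F(\lambda)\le F(x_j)$ yields $|F_n(\lambda)-F(\lambda)|\le |F_n(x_{j-1})-F(x_{j-1})|+|F_n(x_j)-F(x_j)|+(F(x_j)-F(x_{j-1}))$, which is $<3\epsilon$ for large $n$ by weak convergence at the endpoints and the mass bound.

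The main obstacle is the behaviour near an atom $a_i$, where the sandwich alone fails because $F(d_i)-F(c_i)\ge\rho(\{a_i\})\ge\epsilon$ across the jump; this is exactly where the hypothesis $\rho_n(\{\lambda\})\to\rho(\{\lambda\})$ enters. For $\lambda\in[c_i,a_i)$ I would trap $F_n(\lambda)$ between $F_n(c_i)$ and $F_n(a_i^-)=\rho_n((-\infty,a_i))$. Writing $\rho_n((-\infty,a_i))=F_n(d_i)-\rho_n([a_i,d_i])$ and using $\liminf_n\rho_n([a_i,d_i])\ge\liminf_n\rho_n(\{a_i\})=\rho(\{a_i\})$ together with $F_n(d_i)\to F(d_i)$ gives $\limsup_n F_n(a_i^-)\le F(d_i)-\rho(\{a_i\})=F(a_i^-)+\rho((a_i,d_i])\le F(a_i^-)+\epsilon$, while $F_n(a_i^-)\ge F_n(c_i)\to F(c_i)\ge F(a_i^-)-\epsilon$ supplies the matching lower bound. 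Combined with $\rho_n(\{a_i\})\to\rho(\{a_i\})$ this controls $F_n$ at $\lambda=a_i$ and, by the same one-sided bracketing, on $[c_i,a_i)$ and $(a_i,d_i]$, giving $|F_n(\lambda)-F(\lambda)|<C\epsilon$ there for large $n$. Since the grid is finite, I then take $n$ large enough that all finitely many estimates hold simultaneously and conclude $\sup_{\lambda}|F_n(\lambda)-F(\lambda)|\le C\epsilon$; as $\epsilon>0$ was arbitrary, this is the asserted uniform convergence.
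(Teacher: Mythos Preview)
Your argument is correct. The paper itself does not supply a proof of this lemma; it merely states the result as ``essentially standard'' and refers to \cite{LenzV-09}, so there is nothing to compare against on the level of details.

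One small point worth tightening: when you partition the part of $[A,B]$ outside $\bigcup_i(c_i,d_i)$ into subintervals with continuity-point endpoints ``each of $\rho$-mass $<\epsilon$'', the absence of atoms of mass $\ge\epsilon$ guarantees pieces of mass $<C\epsilon$ for some absolute constant (e.g.\ $C=2$), not literally $<\epsilon$, unless you first lower the atom threshold to, say, $\epsilon/2$. Since you collect all estimates into a final bound $\sup_\lambda|F_n(\lambda)-F(\lambda)|\le C\epsilon$ anyway, this does not affect the conclusion; it is only a matter of bookkeeping. Everything else---the tail reduction via continuity points $A,B$ together with $\rho_n(\RR)\to 1$, the monotonicity sandwich on the atom-free grid intervals, and the control near each big atom $a_i$ via $F_n(a_i^-)=F_n(d_i)-\rho_n([a_i,d_i])$ combined with the hypothesis $\rho_n(\{a_i\})\to\rho(\{a_i\})$---is carried out correctly and is exactly the mechanism the lemma is meant to capture.
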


The proof of the main theorem, already stated in Section \ref{s:Result}
is now basically a combination of the previous lemmas.
\begin{proof}[Proof of Theorem \ref{theorem:main}]
 Let $\rho,\rho_n^{(\omega)}:\cB(\RR)\to[0,1]$ be the measures associated to the distribution functions $F$ respectively $F_n^\omega$. Then obviously $\rho$ is a probability measure and the measures $\rho_n^{(\omega)}$ are bounded. As shown in Corollary \ref{theorem:weak}, there exists a set $\Omega_1\subseteq\Omega$ with $\PP(\Omega_1)=1$ such that for all $\omega\in\Omega_1$ the measure $\rho$ is the weak limit of $\rho_n^{(\omega)}$. Furthermore we have by Theorem \ref{theorem:technical} a set $\Omega_2\subseteq\Omega$ with $\PP(\Omega_2)=1$ such that for all $\omega\in\Omega_2$ and all $\lambda\in\RR$ one has $\rho_n^{(\omega)}(\{\lambda\})\to \rho(\{\lambda\})$. Therefore Lemma \ref{lemma:LV4} yields the uniform convergence of the distribution functions for all $\omega\in\Omega_1\cap\Omega_2$.
\end{proof}

\subsection*{Acknowledgment}
The authors thank Christoph Schumacher for fruitful discussions.


\end{document}